\documentclass[11pt,leqno]{amsart}
\usepackage{amsthm,amsfonts,amssymb,amsmath,oldgerm}
\numberwithin{equation}{section}
\usepackage{fullpage}
\usepackage{color}
\usepackage{calrsfs}
\DeclareMathAlphabet{\pazocal}{OMS}{zplm}{m}{n}


\newcommand{\rmc}{{\mathrm{c}}}
\newcommand{\rmh}{{\mathrm{h}}}
\newcommand{\obr}{\overline{\mathbf{r}}}
\newcommand{\mez}{{\frac{1}{2}}}
\newcommand{\tA}{\wti{A}}
\newcommand{\tbV}{\wti{\bV}}
\newcommand{\tu}{\wti{u}}
\newcommand{\tv}{\wti{v}}
\newcommand{\sR}{\hat{\mathcal{R}}}



\def\eps{\varepsilon }



\newcommand\R{\mathbb R}

\def\eps{\varepsilon}


\newcommand\br{\begin{remark}}
\newcommand\er{\end{remark}}
\newcommand\bp{\begin{pmatrix}}
\newcommand\ep{\end{pmatrix}}
\newcommand{\be}{\begin{equation}}
\newcommand{\ee}{\end{equation}}
\newcommand\ba{\begin{equation}\begin{aligned}}
\newcommand\ea{\end{aligned}\end{equation}}


\newcommand{\bap}{\begin{app}}
\newcommand{\eap}{\end{app}}
\newcommand{\begs}{\begin{exams}}
\newcommand{\eegs}{\end{exams}}
\newcommand{\beg}{\begin{example}}
\newcommand{\eeg}{\end{exaplem}}
\newcommand{\bpr}{\begin{proposition}}
\newcommand{\epr}{\end{proposition}}
\newcommand{\bt}{\begin{theorem}}
\newcommand{\et}{\end{theorem}}
\newcommand{\bc}{\begin{corollary}}
\newcommand{\ec}{\end{corollary}}
\newcommand{\bl}{\begin{lemma}}
\newcommand{\el}{\end{lemma}}
\newcommand{\bd}{\begin{definition}}
\newcommand{\ed}{\end{definition}}
\newcommand{\brs}{\begin{remarks}}
\newcommand{\ers}{\end{remarks}}



\newcommand{\A }{\mathcal{A}}

\newcommand{\RR}{{\mathbb R}}

\newcommand{\pa}{{\partial}}

\newcommand{\rms}{{\mathrm{s}}}
\newcommand{\rmu}{{\mathrm{u}}}

\newcommand{\const}{\text{\rm constant}}
\newcommand{\Id}{{\rm Id }}
\newcommand{\im}{{\rm im }}
\newcommand{\Range}{{\rm Range }}

\newcommand{\sgn}{\text{\rm sgn}}
\newtheorem{theorem}{Theorem}[section]
\newtheorem{proposition}[theorem]{Proposition}
\newtheorem{corollary}[theorem]{Corollary}
\newtheorem{lemma}[theorem]{Lemma}

\theoremstyle{remark}
\newtheorem{remark}[theorem]{Remark}
\theoremstyle{definition}
\newtheorem{definition}[theorem]{Definition}

\newtheorem{example}[theorem]{Example}

\newtheorem{obs}[theorem]{Observation}

\newcommand\cJ{{\mathcal J}}

\newcommand\cE{{\mathcal E}}

\newcommand\cQ{{\mathcal Q}}

\newcommand\cM{{\mathcal M}}

\newcommand\bH{{\mathbb H}}

\newcommand\bV{{\mathbb V}}







\newcommand{\f}{\frac}

\newcommand{\wti}{\widetilde}

\newcommand{\bu}{\mathbf{u}}
\newcommand{\bw}{\mathbf{w}}
\newcommand{\bz}{\mathbf{z}}
\newcommand{\obu}{\overline{\mathbf{u}}}

\newcommand{\dom}{\text{\rm{dom}}}

\newcommand{\beq}{\begin{equation}}
\newcommand{\eeq}{\end{equation}}



\title{Invariant manifolds for a class of degenerate evolution equations and structure of kinetic shock layers
}

\author{Kevin Zumbrun}
\address{Indiana University, Bloomington, IN 47405}
\email{kzumbrun@indiana.edu}
\thanks{Research of K.Z. was partially supported
under NSF grant no. DMS-0300487}

\begin{document}

\begin{abstract}
We describe recent results with A. Pogan developing dynamical systems tools for a class of
degenerate evolution equations arising in kinetic theory, including 
the steady Boltzmann and BGK equations.
These yield information on structure of large- and small-amplitude kinetic shocks,
the first steps in a larger program toward time-evolutionary stability and asymptotic behavior.
\end{abstract}

\maketitle

\tableofcontents
\section{Introduction}
In these notes, we describe recent results \cite{PZ1,PZ2} with Alin Pogan developing a set of dynamical
systems tools suitable for the study of existence and structure of shock and boundary layer solutions
arising in Boltzmann's equation and related kinetic models.  These represent the first steps in a larger program to develop 
dynamical systems methods like those used in the study of finite-dimensional viscous and relaxation shocks in 
\cite{GZ,MasZ2,Z2,Z3,Z4,Z5,ZH,ZS}, suitable for treatment of one- and multi-dimensional stability of large-amplitude kinetic shock and boundary layers.

\subsection{Equations and assumptions}
Our goal is the study of shock or boundary layer solutions
\be\label{wave}
\bu(x,t)= \check {\bu}(x), \qquad \lim_{x\to\pm\infty} \check {\bu}(x)=\bu^\pm,
\ee
of kinetic-type relaxation systems
\begin{equation}\label{Relax}
	A^0 \bu_t + A\bu_x = Q(\bu)
\end{equation}
on a Hilbert space $\bH$, where $A^0$ and $A$ are constant bounded 
linear operators, and $Q$, the {\it collision operator}, is a bounded bilinear map.
This leads  us to the study of the associated {\it steady equation}
\be\label{steady}
A\bu'=Q(\bu).
\ee

Following \cite{MZ,PZ1,PZ2}, we make the following structural assumptions.
\smallskip

\noindent{\bf Hypothesis (H1)} (i) The linear operator $A$ is
bounded, self-adjoint, and one-to-one
on the Hilbert space $\bH$, but {\it not boundedly invertible}. (ii) There exists $\bV$ a proper, closed subspace of $\bH$ with $\dim\bV^\perp<\infty$ and $B:\bH\times\bH\to\bV$ is a bilinear, symmetric, continuous map such that $Q(\bu)=B(\bu,\bu)$.

\smallskip

\noindent{\bf Hypothesis (H2)} There exist an equilibrium $\obu\in\ker Q$ satisfying
\begin{enumerate}
\item[(i)] $Q'(\obu)$ is self-adjoint and $\ker Q'(\obu)=\bV^\perp$;
\item[(ii)] There exists $\delta>0$ such that $Q'(\obu)_{|\bV}\leq -\delta I_{\bV}$;
\end{enumerate}

The class of system so described includes in particular our 
main example, of {\it Boltzmann's equation} with hard-sphere potential, written in appropriate coordinates
\cite{MZ}; see Section \ref{s:reduction}.
As regards \eqref{steady}, the main novelty is that $A$ by (H1)(i) has an {\it essential singularity}, i.e., essential
spectrum at the origin,
hence \eqref{steady} is a {\it degenerate evolution equation} to which invariant manifold results
of standard dynamical systems theory do not immediately apply.
Our purpose here is precisely the construction of invariant manifolds for the class of degenerate equations 
\eqref{steady} satisfying (H1)-(H2),
and the application of these tools toward existence and structure of kinetic shock and boundary layers.

\br\label{kawrmk}
We do not assume as in \cite{MZ} the ``genuine coupling'' or ``Kawashima'' condition that
no eigenvector of $A$ lie in the kernel of $Q'(\obu)$.
The assumption $A$ one-to-one implies (trivially) the weaker condition, sufficient for our analysis,
that no zero eigenvector of $A$ lie in the kernel of $Q'(\obu)$.
\er

\subsection{Chapman-Enskog expansion and canonical form}\label{s:ce}
Our starting point is the formal {\it Chapman-Enskog} expansion designed to approximate near-equilibrium flow \cite{L}.
Near $\obu$, (H1)-(H2) yields by the Implicit Function Theorem 
existence of a (Fr\'echet) $C^\infty$ manifold of equlibria 
\be \label{eq}
\cE= \ker Q, \qquad \dim \cE=\dim \bV^\perp=:r,
\ee
tangent to $\bV^\perp$ at $\obu$, expressible in coordinates $\bw:=\bu-\obu$ as a $C^\infty$
graph 
\be\label{v*}
v_*:\bV^\perp \to\bV.
\ee
Denote $u=P_{\bV^\perp}\bu$, $v=P_{\bV}\bu$, where $P_{\bV^\perp}$ and $P_{\bV}$ are the orthogonal
projections onto $\bV^\perp$ and $\bV$ associated with the decomposition $\bH=\bV^\perp \oplus \bV$.
The second-order Chapman-Enskog approximation, or ``hydrodynamic limit,'' of \eqref{Relax} is then
 $h_*(u)_t + f_*(u)_x=D_*u_{xx}$, with associated steady equation 
\be\tag{${\rm CE}$}\label{ce2}
 f_*(u)_x=D_*u_{xx},
\ee
 where $ h_*(u):= P_{\bV^\perp} A^0 (u^T, v_*(u)^T)^T$ and
 \be\label{D}
  f_*(u):= P_{\bV^\perp} A (u^T, v_*(u)^T)^T,
  \quad D_*:=A_{12} E^{-1}  A_{12}^T,
\ee
with $A_{12}:=P_{\bV^\perp}A P_{\bV}$ and $E:= Q'(\obu)_{|\bV}$. See \cite{L,MZ,PZ2} for further details.

From (H1)(ii), $P_{\bV^\perp} (A \bu)'= P_{\bV^\perp}Q\equiv 0$. Integrating, we find that
\eqref{steady} admits a {\it conservation law}
\be\label{cons}
P_{\bV^\perp} A \bu\equiv q=\const.
\ee
By the definition of $f_*$, $v_*$, equilibria $\bu_\pm= (u^T, v_*(u)^T)^T_\pm$ satisfy the
{\it Rankine-Hugoniot} condition
\be\tag{RH}\label{rh}
f_*(u_+)=f_*(u_-)=q
\ee
associated with viscous shock profiles of the Chapman-Enskog system \eqref{ce2}, giving a rigorous connection
at the inviscid level between shock or boundary layer profiles of the two systems \eqref{Relax} and \eqref{ce2}.
A further connection, between the types of the equilibria $\obu=(\bar u^T, v_*(\bar u)^T)^T$
and  $\bar u$ with respect to their associated flows, is given by the following 
key observation proved in Section \ref{s:reduction}.

\begin{lemma}\label{l:canon} System \eqref{steady} may, by an invertible change of coordinates, be put
in canonical form
\ba\label{canon}
w_c'&=Jw_c + \tilde Q_c(w_c,w_h)\\
\Gamma_0 w_h' &= -w_h + \tilde Q_h(w_c,w_h),
\ea
$w_c$ and $w_h$ parametrizing center and hyperbolic (i.e., stable/unstable) subspaces, $\dim w_c=m+r$,
$m=\dim\ker f_*'(\bar u)$, $r=\dim \bV^\perp$,
where $J=\bp 0 & I_m & 0 \\ 0&0 & 0\\ 0&0&0 \ep$ is a nilpotent block-Jordan form, 
$\Gamma_0$ is a constant, bounded symmetric operator, and $\tilde Q_j(w_c,w_h)=O(|w_c,w_h|^2)$.
In case $m=0$, $J, \tilde Q_c\equiv 0$.
\end{lemma}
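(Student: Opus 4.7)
The plan is to reduce \eqref{steady} to the stated canonical form by four successive invertible changes of coordinates: (i) translate by $\obu$; (ii) straighten the equilibrium manifold $\cE$; (iii) choose center coordinates adapted to the conservation law \eqref{cons} and the generalized zero-eigenspace of the pencil $\mu A - Q'(\obu)$; and (iv) normalize the hyperbolic complement by the invertible piece of $Q'(\obu)$.

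Set $\bw = \bu - \obu$ and decompose $\bw = (u,v)$ with $u = P_{\bV^\perp}\bw$, $v = P_\bV\bw$. Block-writing $A = (A_{ij})_{i,j=1,2}$ with $A_{11}=P_{\bV^\perp}AP_{\bV^\perp}$, $A_{12}=P_{\bV^\perp}AP_\bV$, etc., and using that $Q$ is $\bV$-valued by (H1)(ii) together with $Q'(\obu) = \diag(0,E)$, $E \le -\delta I_\bV$ by (H2), equation \eqref{steady} splits into
\[
A_{11} u' + A_{12} v' = 0, \qquad A_{12}^Tu' + A_{22}v' = Ev + N(u,v),
\]
with quadratic remainder $N$; the first identity is the differentiated form of \eqref{cons}. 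Next set $\tilde v := v - v_*(u)$, which straightens $\cE$ to $\{\tilde v = 0\}$. Since $\cE$ is tangent to $\bV^\perp$ at $\obu$ one has $v_*'(0) = 0$, so this is a near-identity $C^\infty$ change preserving the block structure of the linearization.

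For the center coordinates, take $q := P_{\bV^\perp}A\bw$, which by \eqref{cons} satisfies $q' \equiv 0$ and supplies $r = \dim\bV^\perp$ trivial center directions. Generalized eigenvectors at $\mu = 0$ of the pencil $\mu A - Q'(\obu)$ arise from solving $Q'(\obu)\psi = A\phi$ for $\phi\in\bV^\perp$, which forces $A_{11}\phi = 0$ with $\psi|_{\bV} = E^{-1}A_{12}^T\phi$; since $A_{11}=f_*'(\bar u)$ by \eqref{D} (using $v_*'(0)=0$), this contributes exactly $m = \dim\ker f_*'(\bar u)$ additional center directions, each joined to a $q$-direction by a single Jordan step and producing the nilpotent block $J$. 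On the spectral complement $\bH_h$ of the resulting $(m+r)$-dimensional generalized kernel, both $A|_{\bH_h}$ and $Q'(\obu)|_{\bH_h}$ are boundedly invertible (the $\bV$-part of the latter retains $\le -\delta I$, and its $\bV^\perp$-part becomes invertible once $\ker A_{11}$ has been absorbed into the center), so setting $\Gamma_0 := -Q'(\obu)|_{\bH_h}^{-1}A|_{\bH_h}$ yields the hyperbolic block $\Gamma_0 w_h' = -w_h + \tilde Q_h(w_c,w_h)$ with $\Gamma_0$ bounded and, after passage to an equivalent inner product, symmetric. Because every coordinate change is polynomial in the new variables, bilinearity of $Q$ propagates to give $\tilde Q_j(w_c,w_h) = O(|w_c,w_h|^2)$; when $m = 0$ the Jordan block vanishes, $w_c = q$, and $\tilde Q_c \equiv 0$ since $q$ is conserved.

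The main technical obstacle is the bounded invertibility of $A|_{\bH_h}$, i.e., confining the essential singularity of $A$ at $0$ (cf.\ (H1)(i)) to the finite-dimensional center part. This is precisely where the condition from Remark~\ref{kawrmk}, that no zero eigenvector of $A$ lies in $\ker Q'(\obu)$, enters, together with the spectral structure of the pencil $\mu A - Q'(\obu)$; the functional-analytic groundwork for this step rests on the detailed analyses in \cite{MZ,PZ1,PZ2}.
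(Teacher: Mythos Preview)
Your proposal contains a genuine error at its core. You claim that on the hyperbolic complement $\bH_h$ the restriction $A|_{\bH_h}$ is boundedly invertible, equivalently that the essential singularity of $A$ at $0$ can be ``confined to the finite-dimensional center part.'' This is false: essential spectrum is stable under finite-rank perturbations, so restricting $A$ to any finite-codimension subspace cannot remove $0$ from its essential spectrum. The references you invoke do not establish this claim, because it does not hold; Remark~\ref{kawrmk} only guarantees that $A$ is one-to-one, not boundedly invertible on any complement. In fact the non-invertibility of $\Gamma_0$ is precisely the ``degenerate'' in the paper's title and the source of all the analytic difficulty in Sections~\ref{s:dichotomy}--\ref{s:linear}. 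Fortunately you do not actually need $A|_{\bH_h}$ invertible to define $\Gamma_0$ or to obtain the canonical form; what is required is only that $\Gamma_0$ be bounded and symmetric. So the error is in misidentifying the obstacle, not in the canonical form itself---but since your final paragraph is devoted entirely to this false reduction, the argument as written has a gap.

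The paper's proof is quite different in method and avoids pencil spectral theory altogether. After normalizing $E=-\Id$ (Observation~\ref{idobs}), it carries out an explicit block elimination: split $\bV^\perp=\ker A_{11}\oplus\im A_{11}$ and $\bV=\im T_{12}^*\oplus\ker T_{12}$ with $T_{12}:=P_{\ker A_{11}}A_{12}$, use injectivity of $A$ to show $T_{12}^*$ is injective and $\tA_{11}:=A_{11}|_{\im A_{11}}$ is invertible (Lemma~\ref{r2.1}), and then perform Gaussian elimination on the resulting $4\times4$ block system. The hyperbolic variable is simply $w_h=\tv:=P_{\ker T_{12}}v$, and $\Gamma_0=P_{\tbV}(A_{22}-\tA_{12}^*\tA_{11}^{-1}\tA_{12})|_{\tbV}$ is a Schur complement, manifestly bounded and symmetric in the \emph{original} inner product---no passage to an equivalent inner product is needed. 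Your straightening of the equilibrium manifold via $\tilde v=v-v_*(u)$ is also not used; the paper works directly with the bilinear $B$ and linearizes at $\obu$.
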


One may compute that the perturbation equations for \eqref{ce2} about $\bar u$ have the same canonical
form (noting $f_*'(\bar u)= P_{\bV^\perp} AP_{\bV^\perp}$, $D_*$ symmetric) with 
$\Gamma_0$ finite-dimensional, invertible \cite{MaP,Pe}.

\subsection{Dichotomies vs. direct $L^p$ estimate} \label{s:dichotomy}
Lemma \ref{l:canon} effectively reduces the study of near-equilibrium flow of \eqref{steady}
to understanding the hyperbolic operator $(\Gamma_0\partial_x + \Id)$,
specifically, obtaining bounds on solutions of the degenerate inhomogeneous linear evolution system
\be\label{inhom}
(\Gamma_0\partial_x + \Id) w_c=g,
\ee
where $\Gamma_0$ is bounded, symmetric, and one-to-one, but (by (H1)) {\it not boundedly invertible:}
formally,
\be\label{formal}
(\partial_x + \Gamma_0^{-1}) w_c= \tilde g,
\ee
where $\Gamma_0^{-1}$ is an unbounded self-adjoint operator and $\tilde g:= \Gamma_0^{-1} g$.
As $\Gamma_0$ is indefinite, \eqref{formal} is {\it ill-posed} with respect to the Cauchy problem, 
featuring unbounded growth in both directions.

Ill-posed equations, and the derivation of associated resolvent bounds, have been treated in a variety
of contexts via {\it generalized exponential dichotomies}:
for example, modulated waves on cylindrical domains 
\cite{PSS,SS1,SS2}, Morse theory \cite{AM1,AM2,RobSal}, PDE Hamiltonian systems
\cite{BjornSand}, and the functional-differential equations of mixed type \cite{Mallet-Paret}.
It is not difficult to see, either by spectral decomposition of $\Gamma_0$, or by Galerkin approximation, 
that $(\partial_x + \Gamma_0^{-1})$ generates a {\it stable bi-semigroup} \cite{BGK,LP2}, 
the infinite-dimensional analog of an exponential dichotomy, 
that is, there exist bounded projections on whose range the homogeneous flow
is exponentially decaying in forward/backward direction, in this case with rate $|\Gamma_0|_{\bH}^{-1}$,
where $|\cdot|_{\bH}$ denotes operator norm; see \cite{PZ1} for details.

This, however, yields only $ \|u\|\leq C\|\tilde g\|=\|\Gamma_0^{-1}g\|, $
the intervention of the unbounded operator $\Gamma_0^{-1}$ making these bounds useless for our analysis.
Thus, the present problem differs from the above-mentioned ones in that
{\it exponential dichotomies are inadequate to bound the resolvent}
$(\Gamma_0 \partial_x+\Id)^{-1}$.
Indeed, we have the following striking result obtained by direct estimate in Section \ref{s:linear},
showing that our situation is one of {\it maximal regularity}.
In this sense, our analysis is related in flavor to construction of center manifolds for quasilinear systems;
see \cite{HI,Mi}, and references therein.

\begin{lemma}\label{l:linbd}
	Assuming (H1)-(H2), $|(\Gamma_0 \partial_x + \Id)^{-1}|_{L^p(\R)}<\infty$ for $1<p<\infty$,
		but \emph{not} for $p=1,\infty$.
\end{lemma}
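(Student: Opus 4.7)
The plan is to use the spectral theorem to reduce the claim to a statement about maximal $L^p$-regularity of a positive self-adjoint generator on a Hilbert space, then invoke classical results for the positive direction and exhibit an obstruction based on a non-integrable kernel singularity for $p=1,\infty$.

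First I would decompose $\bH = \bH_+ \oplus \bH_-$ via the positive/negative spectral projections of $\Gamma_0$, writing $\Gamma_0 = \Gamma_+ \oplus (-\Gamma_-)$ with each $\Gamma_\pm \geq 0$ self-adjoint on the corresponding Hilbert space. Injectivity of $\Gamma_0$ descends to $\Gamma_\pm$, and failure of bounded invertibility forces $0 \in \sigma(\Gamma_+)$ (say). Then $A := \Gamma_+^{-1}$ is an unbounded positive self-adjoint operator on $\bH_+$ satisfying $A \geq \|\Gamma_+\|^{-1} I$. The equation $(\Gamma_0 \partial_x + I) u = g$ decouples as $(\partial_x + A) u_+ = A g_+$ on $\bH_+$ and the time-reversed analogue on $\bH_-$, so it suffices to bound $T := A(\partial_x + A)^{-1}$ on $L^p(\R; \bH_+)$.

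For $1 < p < \infty$, $-A$ generates an analytic contraction semigroup on a Hilbert space, so by de Simon's maximal $L^p$-regularity theorem $T$ is bounded on $L^p(\R; \bH_+)$. Equivalently, the Fourier multiplier $M(\xi) := A(i\xi + A)^{-1}$ satisfies $\|M(\xi)\|_{\bH_+} \leq 1$ and $\|\xi M'(\xi)\|_{\bH_+} \leq 1/2$ by the functional calculus (the suprema over $\mu \in \sigma(A)$ of $|1 + i\xi/\mu|^{-1}$ and $|\xi|\mu(\mu^2 + \xi^2)^{-1}$), and in a Hilbert space such uniform bounds coincide with R-boundedness, so the operator-valued Mikhlin theorem gives $L^p$-boundedness; the $L^2$ case is in any event immediate from Plancherel.

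For the failure at $p = 1, \infty$, the integral kernel of $T$ is $K(s) = A \me^{-sA} \mathbf{1}_{\{s > 0\}}$, whose operator norm $\|K(s)\|_{\bH_+} = \sup_{\mu \in \sigma(A)} \mu \me^{-s\mu}$ equals $1/(es)$ for all small $s > 0$, the supremum being attained at $\mu = 1/s$ precisely because $\sigma(A)$ reaches $+\infty$. This non-integrable singularity at $s = 0$ precludes any Young-type $L^1$ or $L^\infty$ bound. An explicit counterexample is built from approximate eigenvectors $v_n$ of $A$ with unit norm and eigenvalues $\mu_n \to \infty$ by superposing scaled, well-separated bumps in the directions $v_n$ so that $\|g_n\|_{L^1}$ stays bounded while $\|Tg_n\|_{L^1} \to \infty$; the $L^\infty$ failure follows by duality since $T^\ast = A(-\partial_x + A)^{-1}$ has the same form. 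The hard part is the positive $L^p$ bound for $p \neq 2$, since the non-integrable kernel forecloses any elementary convolution estimate and forces appeal to maximal regularity or vector-valued singular integral theory; the complementary subtlety in the negative direction is that every individual scalar spectral slice yields an $L^1$- and $L^\infty$-contraction, so the endpoint blow-up is a genuinely operator-valued effect requiring the interaction of infinitely many spectral modes.
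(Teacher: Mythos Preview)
Your proposal is correct and follows essentially the same route as the paper: both arguments establish the $L^p$ bound for $1<p<\infty$ via the Mikhlin multiplier condition on the symbol $(i\omega\Gamma_0+\Id)^{-1}$ (your framing through de Simon's maximal regularity and R-boundedness in Hilbert space is an equivalent packaging), and both obtain the endpoint failure by computing that the resolvent kernel has operator norm $\sim 1/|s|$ near $s=0$ via the spectral calculus $\sup_\mu \mu e^{-s\mu}$. The only cosmetic differences are that you first split $\Gamma_0$ into its positive and negative parts and rewrite the resolvent as $A(\partial_x+A)^{-1}$, whereas the paper works directly with $\Gamma_0$ for the symbol estimate and passes to the spectral decomposition only for the counterexample; and you are slightly more explicit than the paper (which defers to an example in \cite{PZ1}) about why the non-integrable kernel norm actually produces unboundedness, correctly flagging that each scalar slice is individually $L^1$- and $L^\infty$-bounded so that the blow-up is a genuinely operator-valued phenomenon.
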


An important consequence is that usual weighted $L^\infty$ constructions of invariant manifolds are unavailable.
We work instead in {\it weighted $H^1$ spaces}, 
with accompanying new technical issues.

\subsection{Results}
We are now ready to state our main results.
Assuming (H1)-(H2), from \eqref{canon} and symmetry of $\Gamma_0$ we readily obtain a decomposition
$\bH=\bH_\rmc\oplus \bH_\rmc\oplus \bH_\rmu$ of $\bH$ into stable, center, and unstable subpaces invariant
under the homogeneous linearized flow of \eqref{steady} about the equilibrium $\obu$.
Let $H^1_\eta(\R,\bH)$ denote the space of functions bounded in the exponentially weighted $H^1$ norm
\be\label{norm}
\|f\|_{H^1_\eta(\R,\bH)}:= \|e^{\eta \langle \cdot \rangle}f(\cdot)\|_{L^2(\R,\bH)}
+\|e^{\eta \langle \cdot \rangle}f'(\cdot)\|_{L^2(\R,\bH)},
\ee
where $\langle x\rangle:=(1+|x|^2)^{1/2}$ and $\eta\in \R$ may be positive or negative according to our needs.
Following \cite{LP2}, we define solutions of \eqref{steady} using Lemma \ref{l:linbd} as 
$H^1_{loc}$ solutions of the fixed-point equation $w_h=(\Gamma_0\partial_x+\Id)^{-1}g_c(w)$ and the finite-dimensional ODE
$(\partial_x-J)w_c=g_c(w)$ in $w_c$; see \cite{PZ1,PZ2}.

\subsubsection{$H^1$ stable manifold and exponential decay of large-amplitude shock and boundary layers}\label{s:invariant}
Our first observation is that for singular $\Gamma_0$ the $H^1$ stable subspace of \eqref{canon}, defined as the trace at $x=0$ of 
solutions $w_h$ bounded in $H^1(\R^+,\bH)$, {\it is a dense proper subspace of $\bH_\rms$,}
related to the domain of the generator $\Gamma_0^{-1}$ of the bi-semigroup associated with
homogeneous linearized flow.

\begin{lemma}\label{l:dom}
	Assuming (H1)-(H2),
	the $H^1$ stable subspace of the linearized equations of \eqref{steady} about $\obu$ 
	(equivalently, the linearization of \eqref{canon} about $0$)
is $\dom(|\Gamma_0|^{-1/2})\cap \bH_\rms \subset \bH_\rms$.
\end{lemma}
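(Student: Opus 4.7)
The plan is to linearize \eqref{canon} at $0$ and describe the $H^1(\R^+,\bH)$ solutions of the resulting decoupled equations
\[
w_c' = J w_c,\qquad \Gamma_0 w_h' = -w_h
\]
explicitly via the spectral theorem for $\Gamma_0$. Since $J$ is nilpotent on a finite-dimensional space, the $w_c$ equation contributes only to $\bH_\rmc$, and the lemma reduces to a statement about the hyperbolic component: which traces $\phi = w_h(0)$ in $\bH_\rmh := \bH_\rms \oplus \bH_\rmu$ yield $w_h\in H^1(\R^+,\bH)$?

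Applying the spectral theorem to the bounded self-adjoint, one-to-one operator $\Gamma_0$ on $\bH_\rmh$, I write $\Gamma_0 = \int \lambda\, dE(\lambda)$, with $\bH_\rms$ and $\bH_\rmu$ the spectral subspaces for $\lambda>0$ and $\lambda<0$, respectively. For $\phi\in\bH_\rms$ define
\[
w_h(x) := \int_0^\infty e^{-x/\lambda}\, dE(\lambda)\,\phi,\qquad x\geq 0,
\]
which is pointwise an element of $\bH$ (since $e^{-x/\lambda}\leq 1$ on the spectral support) and differentiates strongly for $x>0$ — the factor $e^{-x/\lambda}$ tames the singularity of $\lambda^{-1}$ at $\lambda\to 0^+$ — so that $\Gamma_0 w_h' = -w_h$. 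A Fubini–spectral computation then gives
\[
\|w_h\|_{L^2(\R^+;\bH)}^2 = \tfrac12 \langle \Gamma_0 \phi,\phi\rangle,\qquad \|w_h'\|_{L^2(\R^+;\bH)}^2 = \tfrac12 \bigl\||\Gamma_0|^{-1/2}\phi\bigr\|^2,
\]
so $w_h\in H^1(\R^+,\bH)$ precisely when $\phi\in\dom(|\Gamma_0|^{-1/2})\cap \bH_\rms$. This is one containment.

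The main obstacle is the converse: showing that every $H^1(\R^+,\bH)$ solution must have trace in $\bH_\rms$ (not in $\bH_\rmu$) and arises from the representation above. For this I would first invoke the solution concept from Section \ref{s:invariant} as $H^1_{loc}$ fixed points of $w_h = (\Gamma_0\partial_x+\Id)^{-1} g_h$, together with the resolvent bound of Lemma \ref{l:linbd}, to obtain uniqueness of the Cauchy problem on $\R^+$ for fixed trace $w_h(0)$; this reduces matters to classifying the $H^1$ homogeneous solutions by their trace. Second, spectral calculus shows that any $\bH_\rmu$-component of $w_h(0)$ evolves by $e^{-x/\lambda}$ with $\lambda<0$, which grows exponentially as $x\to+\infty$ and is incompatible with $L^2(\R^+)$; hence the trace lies in $\bH_\rms$, and the norm identity above completes the identification. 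Density of $\dom(|\Gamma_0|^{-1/2})\cap\bH_\rms$ in $\bH_\rms$ follows from one-to-oneness of $\Gamma_0$, while properness follows from failure of bounded invertibility in (H1)(i), which makes $|\Gamma_0|^{-1/2}$ unbounded — matching the qualitative claim preceding the lemma.
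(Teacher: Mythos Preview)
Your argument is correct and is precisely the ``spectral decomposition of $\Gamma_0$ and direct computation'' that the paper mentions as its alternative route (citing \cite{PZ1}); the explicit Fubini identities you derive, $\|w_h\|_{L^2}^2=\tfrac12\langle \Gamma_0\phi,\phi\rangle$ and $\|w_h'\|_{L^2}^2=\tfrac12\||\Gamma_0|^{-1/2}\phi\|^2$, are exactly what that computation yields. The paper's primary proof is a repackaging of the same calculation: it writes the solution as $e^{\Gamma_0^{-1}x}f$ and observes that $\int_0^\infty\langle\partial_x e^{\Gamma_0^{-1}x}f,\partial_x e^{\Gamma_0^{-1}x}f\rangle\,dx$ can be rewritten as $-\tfrac12\int_0^\infty\partial_x\langle e^{\Gamma_0^{-1}x}|\Gamma_0|^{-1/2}f,e^{\Gamma_0^{-1}x}|\Gamma_0|^{-1/2}f\rangle\,dx$, then integrates to get $\||\Gamma_0|^{-1/2}f\|^2<\infty$ --- an integration-by-parts version of your spectral integral. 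You actually supply more than the paper does: the paper simply starts from $f\in\bH_\rms$ and the semigroup representation without justifying the converse (that any $H^1$ solution has trace in $\bH_\rms$ and is given by the semigroup), whereas you address uniqueness and the exclusion of $\bH_\rmu$ explicitly.
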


\begin{proof}
The $H^1$ stable subspace consists of $f\in \bH_\rms$ such that
$ \int_0^\infty \langle \partial_x e^{\Gamma_0^{-1} x}f, \partial_x e^{\Gamma_0^{-1} x}f\rangle  dx <\infty,  $
or, equivalently,
$ -(1/2)\int_0^\infty \partial_x \langle   e^{\Gamma_0^{-1} x} |\Gamma_0|^{-1/2}f,  
e^{\Gamma_0^{-1} x}|\Gamma_0|^{-1/2}f\rangle  dx <\infty.  $
Integrating, and observing that the boundary term at infinity vanishes, gives condition
$\langle |\Gamma_0|^{-1/2}f , |\Gamma_0|^{-1/2}f \rangle  <\infty.  $
Alternatively, this may be deduced by spectral decomposition of $\Gamma_0$ and direct computation \cite{PZ1}.
\end{proof}

We have accordingly the following modification of the usual stable manifold theorem.

\begin{theorem}\label{t1.3}
Assuming (H1)-(H2), for any $0<\alpha<\tilde \nu <\nu:=|\Gamma_0|_{\bH}^{-1}$, 
there exists a local stable manifold $\cM_\rms$ near $\obu$, 
expressible in coordinates $w=\bu-\obu$ as a $C^1$ embedding tangent to $\bH_s$ 
of $\dom(\Gamma_0^{-1/2})\cap \bH_\rms$ with (graph) norm induced by $\Gamma_0^{-1/2}$ into $\bH$, 
locally invariant under the flow of \eqref{steady}, containing the orbits of all solutions $w$ with
$ H^1_{\alpha}(\RR_+,\bH)$ norm sufficiently small,
with solutions $w$ initiating in $\cM_\rms$ at $x=0$ lying in $ H^1_{\tilde \nu}(\RR_+,\bH)$.
In case $\det f_*'(u_+)\neq 0$, $\alpha$ may be taken to be zero.
\end{theorem}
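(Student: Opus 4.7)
The plan is to construct $\cM_\rms$ via a Lyapunov--Perron fixed point argument in a weighted $H^1$ space, working in the canonical coordinates $w=(w_\rmc,w_h)$ of Lemma~\ref{l:canon}. I would further split $w_h=w_\rms+w_\rmu$ according to the spectral projections of $\Gamma_0$ onto its positive/negative parts, so that forward-decaying solutions are parametrized at $x=0$ by their $\bH_\rms$-component; by Lemma~\ref{l:dom} this parameter space is the dense subspace $\dom(|\Gamma_0|^{-1/2})\cap\bH_\rms$ with its graph norm.

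The first and central step is the linear inhomogeneous problem: for $\xi\in\dom(|\Gamma_0|^{-1/2})\cap\bH_\rms$ and $g\in H^1_\alpha(\R_+,\bH)$, solve
\begin{equation*}
(\Gamma_0\partial_x+I)w_h=g,\qquad P_\rms w_h(0)=\xi,\qquad w_h\in H^1_{\tilde\nu}(\R_+,\bH),
\end{equation*}
with bounds linear in $\||\Gamma_0|^{-1/2}\xi\|$ and $\|g\|_{H^1_\alpha}$. My strategy is to conjugate by $e^{\tilde\nu\langle x\rangle}$ (which perturbs the symbol by a bounded operator of norm $<\nu$ and is absorbed into the inverse), extend $g$ to all of $\R$, and apply Lemma~\ref{l:linbd} for $p=2$ to obtain a particular solution, upgrading to $H^1$ via the equation itself. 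One then corrects at $x=0$ by free solutions $e^{-\Gamma_0^{-1}x}\eta$ on $\bH_\rms$ and $\bH_\rmu$ so as to (a) annihilate the $\bH_\rmu$-trace of the particular solution and (b) match the prescribed $\xi$ on $\bH_\rms$. Lemma~\ref{l:dom} guarantees each correction lies in $H^1_{\tilde\nu}$ exactly when its trace value lies in $\dom(|\Gamma_0|^{-1/2})$, which holds for the particular solution by the $H^1$-trace theorem and for $\xi$ by hypothesis. For the finite-dimensional center component $(\partial_x-J)w_\rmc=\tilde Q_\rmc(w)$, nilpotence of $J$ makes $e^{xJ}$ polynomial; the backward Duhamel integral $w_\rmc(x)=-\int_x^\infty e^{(x-y)J}\tilde Q_\rmc(w)(y)\,dy$ is bounded on $H^1_\alpha$ for any $\alpha>0$. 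When $\det f_*'(u_+)\neq 0$, Lemma~\ref{l:canon} yields $m=0$, $J\equiv 0$, $\tilde Q_\rmc\equiv 0$, the center component disappears, and the restriction $\alpha>0$ may be dropped.

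For the nonlinear closure, the one-dimensional Sobolev embedding $H^1(\R)\hookrightarrow L^\infty(\R)$ and the continuity of the bilinear map in Hypothesis~(H1)(ii) yield an algebra-type bound of the form $\|\tilde Q(w,w)\|_{H^1_\alpha}\le C\|w\|_{H^1_\alpha}\|w\|_{H^1}$. Combining this with the linear solvability above, the Lyapunov--Perron operator $\mathcal{T}_\xi(w)$ which assigns to $w$ the solution of the linear problem with right-hand side $\tilde Q(w)$ and $\bH_\rms$-boundary data $\xi$ is a contraction on a small ball of $H^1_\alpha(\R_+,\bH)$, with Lipschitz dependence on $\xi$ in the graph norm. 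The unique fixed point $w[\xi]\in H^1_{\tilde\nu}$ depends $C^1$ on $\xi$ by the implicit function theorem, and the evaluation map $\Phi:\xi\mapsto w[\xi](0)$ is the desired $C^1$ embedding; tangency to $\bH_\rms$ at the origin follows from $D\mathcal{T}_0(0)=0$, giving $D\Phi(0)=$ inclusion. Local invariance is then immediate from uniqueness of the fixed point under forward time-translation.

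The hardest part is the linear step. Standard stable-manifold arguments exploit $L^\infty$ dichotomy bounds together with a free splitting of the boundary condition in $\bH_\rms$, but Lemma~\ref{l:linbd} explicitly rules out the $L^\infty$ setting, while Lemma~\ref{l:dom} shows that the admissible trace space is only a \emph{dense proper subspace} of $\bH_\rms$. Reconciling these --- producing a right inverse that is bounded simultaneously from $H^1_\alpha$-data in the interior and $\dom(|\Gamma_0|^{-1/2})$-data on the boundary, with the correction terms estimated against the graph norm rather than the $\bH$-norm of $\xi$ --- is the main technical obstacle and is what dictates the entire function-space framework of the proof.
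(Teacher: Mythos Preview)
Your overall framework---Lyapunov--Perron in weighted $H^1$, parametrized by the $H^1$-stable subspace $\dom(|\Gamma_0|^{-1/2})\cap\bH_\rms$---matches the paper, and you correctly flag the linear inhomogeneous problem as the crux. But your proposed mechanism for that step contains a genuine gap, at exactly the point the paper singles out as decisive.

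The assertion that the trace $P_\rms w_p(0)$ of your particular solution lies in $\dom(|\Gamma_0|^{-1/2})$ ``by the $H^1$-trace theorem'' is not justified: Sobolev trace gives only $w_p(0)\in\bH$. In spectral coordinates $\lambda>0$ of $\Gamma_0$, integration by parts yields $w_{p,\lambda}(0)=\tilde g_\lambda(0)-\int_{-\infty}^0 e^{y/\lambda}\tilde g'_\lambda(y)\,dy$, so $\||\Gamma_0|^{-1/2}P_\rms w_p(0)\|^2$ contains the generically divergent contribution $\int\lambda^{-1}|g_\lambda(0)|^2\,d\mu=\||\Gamma_0|^{-1/2}g(0)\|^2$. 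Hence the correction $e^{-\Gamma_0^{-1}x}(\xi-P_\rms w_p(0))$ is not in $H^1(\R_+)$ and your linear solution map fails to land in $H^1$. The alternative reading of ``upgrading to $H^1$ via the equation itself'' fails for the same reason: $\Gamma_0 w_p'\in L^2$ does \emph{not} imply $w_p'\in L^2$ when $\Gamma_0$ is not boundedly invertible---this is precisely the degeneracy in (H1)(i). (Incidentally, step~(a), correcting the $\bH_\rmu$-trace by an unstable homogeneous solution, is also wrong: such solutions blow up on $\R_+$. No correction is needed there, since the full-line particular solution already decays forward.)

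The paper's resolution is the idea your proposal lacks. Rather than parametrize by $u_0=\Pi_S u(0)$, it parametrizes by $v_0$ chosen so that, schematically, $u_0-B(u(0),u(0))=v_0$, writing the fixed-point map as \eqref{modfixedstable}. Upon differentiation, the boundary contribution $B(u(0),u(0))$ arising from the resolvent term (via the jump of the zero-extended nonlinearity at $x=0$) cancels against the shift in the homogeneous term, leaving the clean derivative equation $u'(\tau)=T_S'(\tau)\Pi_S v_0+(\Gamma_0\partial_x-\Id)^{-1}(B(u,u))'(\tau)$, which is bounded in $L^2$ once $v_0\in\dom(|\Gamma_0|^{-1/2})$ and $B(u,u)'\in L^2$. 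This ``integration by parts'' reparametrization is exactly what Remark~\ref{stablermk} identifies as the key technical point, and it is what makes the $H^1$ contraction close.
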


We obtain as a consequence exponential decay of noncharacteristic shock or boundary layers.

\begin{corollary}\label{c5.9}
	Assuming (H1)-(H2), let $\obu$ be a noncharacteristic equilibrium
	in the sense of \eqref{ce2}, $\det f_*'(\bar u)\neq 0$, and $\tilde \nu <\nu= 1/|\Gamma_0|_{\bH}$.
	Then, for any solution $\check{\bu}$ of \eqref{steady} converging to $\obu$ as $x\to +\infty$ in the sense that
	$\check {\bu}- \obu$ is eventually bounded in $H^1([x,\infty),\bH)$, we have \emph{exponential decay}:
		\be\label{expdecay}
		|\check u-\obu|_{\bH}(x)\lesssim e^{-\tilde \nu x}
		\quad \hbox{\rm as $x\to +\infty$.}
		\ee
\end{corollary}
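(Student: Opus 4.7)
My plan is to deduce Corollary \ref{c5.9} from Theorem \ref{t1.3} by a translation-plus-embedding argument. The noncharacteristic hypothesis $\det f_*'(\bar u)\neq 0$ allows $\alpha=0$ in Theorem \ref{t1.3}, so its conclusion reads: every solution whose $H^1(\R_+,\bH)$ norm is sufficiently small lies on the local stable manifold $\cM_\rms$ and belongs to $H^1_{\tilde\nu}(\R_+,\bH)$ for every $\tilde\nu<\nu$. Thus it suffices to realize a suitable translate of $\check{\bu}-\obu$ as a small-$H^1$ solution and then read off the pointwise exponential decay from membership in $H^1_{\tilde\nu}$.

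First I would shift the base point to exploit the tail hypothesis. Since $\check{\bu}-\obu\in H^1([X,\infty),\bH)$ for some $X$, absolute continuity of the integral gives $\|\check{\bu}-\obu\|_{H^1([x_0,\infty),\bH)}\to 0$ as $x_0\to+\infty$. Choose $x_0\geq X$ large enough that this tail norm falls below the smallness threshold of Theorem \ref{t1.3}. By autonomy of \eqref{steady}, the translated function $w(\cdot):=\check{\bu}(\cdot+x_0)-\obu$ is itself a solution, of small $H^1(\R_+,\bH)$ norm, so Theorem \ref{t1.3} puts its orbit on $\cM_\rms$ and yields $w\in H^1_{\tilde\nu}(\R_+,\bH)$ for any $\tilde\nu<\nu$.

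Next I would convert this weighted $H^1$ conclusion into pointwise decay by Sobolev embedding. Setting $\tilde w(x):=e^{\tilde\nu\langle x\rangle}w(x)$, one checks directly from the definition \eqref{norm} that $\tilde w\in H^1(\R_+,\bH)$ with norm controlled by $\|w\|_{H^1_{\tilde\nu}(\R_+,\bH)}$, because $|\langle\cdot\rangle'|\leq 1$. The Hilbert-valued analogue of the classical embedding $H^1(\R_+)\hookrightarrow C_b(\R_+)$ then gives $|\tilde w(x)|_\bH\lesssim\|\tilde w\|_{H^1(\R_+,\bH)}$, and hence $|w(x)|_\bH\lesssim e^{-\tilde\nu\langle x\rangle}\lesssim e^{-\tilde\nu x}$ for $x\geq 0$. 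Translating back by $x_0$ and absorbing $e^{\tilde\nu x_0}$ into the implicit constant produces \eqref{expdecay}.

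The main obstacle, I anticipate, is making the first step fully rigorous: the notion of ``solution'' from Section \ref{s:dichotomy} is formulated in the coordinates of \eqref{canon} as an $H^1_\loc$ fixed-point problem involving $(\Gamma_0\partial_x+\Id)^{-1}$ together with the finite-dimensional ODE for $w_c$, so one must check via Lemma \ref{l:canon} and the autonomy of \eqref{steady} that the shifted $w$ remains such a solution, and that its $H^1$ smallness in the original coordinates implies the relevant smallness in canonical form for Theorem \ref{t1.3} to apply. This is essentially bookkeeping via the continuity of the coordinate change, but it is the one place where the coordinate-free hypothesis on $\check{\bu}$ has to be reconciled with the coordinate-dependent smallness condition built into the stable manifold construction.
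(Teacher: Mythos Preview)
Your proposal is correct and follows essentially the same route as the paper: translate $\check{\bu}$ far enough to the right that the tail has small $H^1(\R_+,\bH)$ norm, invoke Theorem \ref{t1.3} (with $\alpha=0$ in the noncharacteristic case) to place the translate in $H^1_{\tilde\nu}(\R_+,\bH)$, and then read off pointwise decay via Sobolev embedding. The paper's proof is the same argument in two sentences; your additional remarks on the coordinate change and the definition of solution are reasonable but not a different method.
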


\subsubsection{Center manifold and structure of small-amplitude shock layers}
We have, similarly, the following modification of the usual center manifold theorem (cf. \cite{B,HI,Z1,Zode}).

\begin{theorem}\label{t1.1}
Let $\obu$ be an equilibrium satisfying (H1)-(H2).  Then, for any integer $k\geq2$
there exists local to $\obu$ a $C^k$ center manifold $\cM_\rmc$, tangent at $\obu$ to $\bH_\rmc$,
expressible in coordinates $\bw:=\bu-\obu$ as a $C^k$ graph $\cJ_\rmc:\bH_\rmc\to\bH_\rms\oplus\bH_\rmu$,
that is locally invariant under the flow of \eqref{steady} and contains all solutions
that remain sufficiently close to $\obu$ in forward and backward $x$.
Moreover, $\cM_\rmc$ has the \emph{$H^1$ exponential approximation property}: for any $0<\tilde \nu< \nu=1/|\Gamma_0|_{\bH}$,
a solution ${\bu}$ of \eqref{steady} 
with $\|{\bu}-\obu\|_{H^1_{-\alpha}\cap L^\infty([M,\infty),\bH)}$ and $\alpha>0$ sufficiently small approaches a solution 
$\bz$ with orbit lying in $\cM_\rmc$ as $x\to +\infty$ at exponential rate $\|{\bu}-\bz\|_{\bH}\lesssim e^{-\tilde \nu x}$,
with also $\|{\bu}-\bz\|_{H^1_{\tilde \nu}([M,\infty),\bH)}<\infty$.
\end{theorem}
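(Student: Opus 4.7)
The plan is to apply a Lyapunov--Perron construction in the canonical coordinates of Lemma~\ref{l:canon}. After multiplying the nonlinearities $\tilde Q_c,\tilde Q_h$ by a smooth radial cutoff $\chi(|w|/\delta)$, one obtains globally defined quadratic remainders whose Lipschitz constants are $O(\delta)$. Lemma~\ref{l:linbd} gives boundedness of $(\Gamma_0\partial_x+\Id)^{-1}$ on $L^p(\R,\bH)$ for $1<p<\infty$; a conjugation argument (multiplying by $e^{\eta\langle x\rangle}$, so that the commutator equals $-\eta\Gamma_0\langle x\rangle'$, bounded in operator norm by $|\eta|\,|\Gamma_0|_\bH$) then extends this, via Neumann series, to boundedness on the weighted space $H^1_\eta(\R,\bH)$ for any $|\eta|<\nu:=1/|\Gamma_0|_\bH$. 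In parallel, the nilpotent semigroup $e^{Jx}$ grows only polynomially in $x$ and is therefore compatible with any small exponential weight.

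Fixing $0<\alpha<\tilde\nu<\nu$, I would then set up the fixed-point equation
\[
w_c(x)=e^{Jx}\xi+\int_0^x e^{J(x-s)}\chi\tilde Q_c(w)(s)\,ds,\qquad w_h=(\Gamma_0\partial_x+\Id)^{-1}\chi\tilde Q_h(w),
\]
parametrized by $\xi\in\bH_\rmc$, in a ball of $H^1_{-\alpha}(\R,\bH_\rmc\oplus\bV)$. For $\delta$ sufficiently small, the right-hand side is a uniform contraction in $\xi$, yielding a unique Lipschitz fixed point $w(\cdot;\xi)$. The center manifold $\cM_\rmc$ is defined as the graph $\xi\mapsto\cJ_\rmc(\xi)$, where $\cJ_\rmc(\xi)$ denotes the $\bH_\rms\oplus\bH_\rmu$-component of the trace $w(0;\xi)$; local invariance follows from the semiflow interpretation of the canonical form restricted to orbits bounded in both $x$-directions, and tangency at $\obu$ is immediate because $\tilde Q_j=O(|w|^2)$.

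Upgrading to $C^k$ regularity proceeds by the standard parameter-contraction / scale-of-spaces scheme of Vanderbauwhede--van Gils: the formal $j$th derivative of the fixed point in $\xi$ satisfies a linear fixed-point equation in the same weighted $H^1$ space that remains contractive as soon as the cutoff radius $\delta=\delta(k)$ is chosen small enough, and a telescoping remainder estimate identifies this linear fixed point with the genuine Fr\'echet derivative, inductively for $1\le j\le k$. Continuity of $\cJ_\rmc$ as a map valued in $\bH$ (rather than in a weaker topology) is then checked by combining $H^1$-trace continuity with the domain-norm identification provided by Lemma~\ref{l:dom}.

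The delicate part is the $H^1$ exponential approximation property. Given $\bu$ with $\|\bu-\obu\|_{H^1_{-\alpha}\cap L^\infty([M,\infty),\bH)}$ small, I would seek $\bz$ on $\cM_\rmc$ with $\bu-\bz\in H^1_{\tilde\nu}([M,\infty),\bH)$ by a second Lyapunov--Perron argument on the half-line: passing to canonical coordinates, the hyperbolic component satisfies $(\Gamma_0\partial_x+\Id)(w_h^\bu-w_h^\bz)=\chi\tilde Q_h(w^\bu)-\chi\tilde Q_h(w^\bz)$, which by the weighted version of Lemma~\ref{l:linbd} above is $O(\delta)$-contractive in $H^1_{\tilde\nu}$; the finite-dimensional center component is parametrized by $w_c^\bz(M)$, chosen via an implicit-function argument so as to annihilate the unstable projection of $w_c^\bu(M)-w_c^\bz(M)$. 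I expect the main obstacle to be precisely the extraction of the pointwise bound $|\bu-\bz|_\bH(x)\lesssim e^{-\tilde\nu x}$ from the weighted $H^1_{\tilde\nu}$ control: because $\Gamma_0^{-1}$ is unbounded, the classical $L^\infty$-based center-manifold machinery is unavailable, and one must route the pointwise estimate through the $H^1$ Sobolev embedding into $C_b(\R,\bH)$ together with the domain-norm description of $\bH_\rms$ from Lemma~\ref{l:dom}, an issue that did not arise in the finite-dimensional or bounded-$\Gamma_0^{-1}$ settings.
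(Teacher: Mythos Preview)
Your overall strategy matches the paper's, but you have glossed over precisely the step the paper identifies as its main novelty. You assert that for $\delta$ small the fixed-point map is a contraction on a ball in $H^1_{-\alpha}(\R,\bH)$. This fails by the obvious estimate: writing $N_\delta(w)=\chi(|w|/\delta)\tilde Q(w)$, one has $|N_\delta'|=O(\delta)$ but only $|N_\delta''|=O(1)$, so $\partial_x\big(N_\delta(v_1)-N_\delta(v_2)\big)$ contains a term of size $|N_\delta''|\,|v_1-v_2|\,|\partial_x v_2|\sim |v_1-v_2|\,|\partial_x v_2|$ carrying no small factor. Estimating this in $L^2_{-\alpha}$ via the Sobolev bound $|v_1-v_2|(x)\lesssim e^{\alpha\langle x\rangle}\|v_1-v_2\|_{H^1_{-\alpha}}$ produces the \emph{unweighted} norm $\|\partial_x v_2\|_{L^2(\R)}$, which is infinite for generic elements of $H^1_{-\alpha}$. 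Hence the substitution operator is not Lipschitz from $H^1_{-\alpha}$ to itself with small constant, and your contraction does not close. (Contraction in $L^2_{-\alpha}$ alone does hold, but combined with mere boundedness in $H^1_{-\alpha}$ it yields, via the trace inequality, only a $C^{0+1/2}$ graph.)

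The paper's repair is to work in the \emph{mixed norm} $\|f\|:=\|f\|_{L^2_{-\alpha}}+\|\partial_x f\|_{L^2_{-\alpha_2}}$ with $0<\alpha_1\ll\alpha\ll\alpha_2$: the extra decay $e^{-(\alpha_2-\alpha)\langle x\rangle}$ in the derivative weight absorbs the rogue factor, giving
\[
\int_\R e^{-2\alpha_2\langle x\rangle}|v_1-v_2|^2|\partial_x v_2|^2\,dx
\;\lesssim\; \|v_1-v_2\|^2\int_\R e^{-(\alpha_2-\alpha)\langle x\rangle}|\partial_x v_2|^2\,dx
\;\lesssim\; \|v_1-v_2\|^2\,\|v_2\|_{H^1_{-\alpha_1}}^2,
\]
and $C^k$ regularity then proceeds through a nested sequence of such mixed norms. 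Your appeal to the Vanderbauwhede--van Gils scale of spaces is in the right spirit, but it must already be deployed at the Lipschitz stage, and with weights separating $f$ from $\partial_x f$, not merely separating orders of differentiation in the parameter $\xi$. Finally, you have inverted the difficulties: the $H^1$ exponential approximation property, which you flag as the delicate part, is obtained in the paper by routine transcription of the standard $C^0$ argument once the weighted-$H^1$ machinery is in place; the pointwise bound $|\bu-\bz|_\bH(x)\lesssim e^{-\tilde\nu x}$ follows directly from Sobolev embedding of $H^1_{\tilde\nu}$ and needs no recourse to Lemma~\ref{l:dom}.
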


Here, the only difference from the standard center manifold theorem \cite{B}
is the weakened, $H^1$, version of the exponential approximation property.
For applications involving normal form reduction, they are essentially equivalent; in particular, the
formal Taylor expansion for center graph $w_h=\Xi(w_c)$ may be computed to arbitrary order 
in coordinates \eqref{canon} by successively matching terms of increasing order in 
the defining relation $\Gamma_0 \Xi(w_c)'  =-\Xi(w_h)+\tilde Q_h$, or equivalently
 $ \Xi(w_c)= -\Gamma_0 \Xi'(w_c)(Jw_c + \tilde Q_c)+ \tilde Q_h  $,
 exactly as in the usual (nonsingular $A$, $\Gamma_0$) case \cite{Carr,HI}.

\br\label{cmdecay}
In the noncharacteristic case, the center manifold, by dimensional count and the fact that it must contain all local equilibria,
is uniquely determined as the manifold of equilibria $\cE$.
In this case, the exponential approximation property improves slightly the result of Corollary \ref{c5.9},
yielding that solutions $\check {\bu}$ of \eqref{steady} lying sufficiently close to $\obu$ in $L^\infty(\R^+,\bH)$ 
and sufficiently slowly {exponentially growing} in $H^1$, converges to an equilibrium
at exponential rate $e^{-\tilde \nu x}$, $0<\tilde \nu<1/|\Gamma_0|_{\bH}$.
\er

Denote the characteristics of Chapman-Enskog system \eqref{ce2}, or eigenvalues of $f_*'(u)$, 
by 
$$
\lambda_1(u)\leq \dots \leq \lambda_r(u).
$$
The {\it noncharacteristic case} $f_*'(\bar u)\neq 0$ is the case that no characteristic velocity $\lambda_j(\bar u)$
vanishes, in which case,
by the Inverse Function Theorem, the Rankine-Hugoniot equations \eqref{rh} admit a single nearby solution for each value of $q$,
hence no local shock connections occur.
To study small-amplitude shock profiles, we focus therefore on the {\it characteristic case} $f_*'(\bar u)=0$, specifically
on the generic case that $\lambda_j(\bar u)=0$ for a single characteristic velocity $\lambda_p$, 
with associated unit eigenvector $\obr$, that is {\it genuinely nonlinear} in the sense of Lax \cite{La,Sm}:
\be\tag{GNL}\label{gnl}
\Lambda:=\obr\cdot f_*''(\bar u)(\obr,\obr)\neq 0.
\ee

In this case, it is well known \cite{La,Sm,MaP} that there exists a family of small-amplitude shock profiles 
$\check{\bar u}$ of \eqref{ce2} connecting
endstates $\bar u_\pm\to \bar u$, with $(\bar u_+-\bar u_-)$ lying in approximate direction $\obr$, 
with $\lambda:=\lambda_p(\check{\bar u})$ satisfying an approximate Burgers equation
\be\label{burgers}
\delta \lambda'=-\eps^2 + \lambda^2/2 + O(|\eps, \lambda|^3),
\ee
$\Lambda$ as in \eqref{gnl}, $\eps>0$ parametrizing amplitude,
provided there holds the {\it stable viscosity criterion}
$\delta:=\obr \cdot D_* \obr>0,$
as may be readily seen to hold for $D_*$ using \eqref{D} and (H1) (cf. Rmk. \ref{kawrmk}).

Our final result gives a corresponding characterization of small-amplitude kinetic shocks of \eqref{steady}
bifurcating from a simple genuinely nonlinear eigenvalue of $f_*'(\bar u)$.  The complementary case of bifurcation from a
multiple, linearly degenerate eigenvalue of $f_*'(\bar u)$ \cite{La,Sm} is treated also in \cite[Thm. 1.5]{PZ2} (not stated here);
in that case, no nontrivial shock or boundary layer connections exist.

\begin{corollary}\label{c1.3}
Let $\obu$ be an equilibrium satisfying (H1)-(H2)
in the characteristic case \eqref{gnl}, $\lambda_p(\bar u)=0$ a simple eigenvalue, and $k$ an integer $\geq 2$.
Then, local to $\obu$, $\bar u$, each pair of points $u_\pm$ satisfying the Rankine-Hugoniot condition \eqref{rh}
has a corresponding viscous shock solution $u_{CE}$ of \eqref{ce2} and
relaxation shock solution $\bu_{REL}=(u_{REL},v_{REL})$ of \eqref{steady}, satisfying for all $j\leq k-2$:
\begin{equation}\label{finalbds}
\begin{aligned}
	\big|\partial_x^j ( u_{REL}- u_{CE})\big| &\le C \eps^{j+2}e^{-\mu  \eps|x|},\\
	\big|\partial_x^j \big(v_{REL}-v_*(u_{CE})\big)\big| &\le C \eps^{j+2}e^{-\mu  \eps|x|},\\
|\partial_x^j (u_{REL}-u_\pm)|&\le C \eps^{j+1}e^{-\mu \eps|x|}, \quad x\gtrless 0,\\
\end{aligned}
\end{equation}
$\mu>0$, $C>0$,  $\eps:=|u_+-u_-|$, 
unique up to translation, 
with $\lambda_p(u_{REL})$ and $\lambda_p(u_{CE})$ both satisfying approximate Burgers equations \eqref{burgers}:
in particular, both monotone decreasing in $x$.
\end{corollary}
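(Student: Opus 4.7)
The plan is to leverage the center-manifold reduction of Theorem~\ref{t1.1} to convert the degenerate infinite-dimensional problem \eqref{steady} into a finite-dimensional ODE on $\cM_\rmc$ of exactly the same canonical form as the corresponding center-manifold reduction of the Chapman--Enskog system \eqref{ce2}, and then to invoke the classical Lax/Majda--Pego theory for small-amplitude viscous shock profiles on the reduced system.

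First I would note that any candidate profile $\bu_{REL}$ connecting $u_\pm$ near $\obu$ is bounded on $\R$ and close to $\obu$ in $L^\infty$, hence by the $H^1$ exponential approximation property of Theorem~\ref{t1.1} its orbit lies in $\cM_\rmc$. Writing \eqref{steady} in the canonical form \eqref{canon} and substituting $w_h=\Xi(w_c)$ from the center graph yields a $C^k$ ODE on $\bH_\rmc$ of the form $w_c' = Jw_c + N_{REL}(w_c)$ with $N_{REL}(w_c)=O(|w_c|^2)$. An identical reduction for \eqref{ce2} about $\bar u$---available because, as noted after Lemma~\ref{l:canon}, \eqref{ce2} has the same canonical form on a finite-dimensional space of the same dimension $m+r$---gives $w_c' = Jw_c + N_{CE}(w_c)$ on the same center space.

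Next I would compare these two reduced vector fields. Since \eqref{ce2} is \emph{by construction} the second-order Chapman--Enskog truncation of \eqref{steady}, and since in both cases the center graph $\Xi$ is computed order-by-order from the same defining relation $\Gamma_0\Xi'(w_c)(Jw_c+\tilde Q_c) = -\Xi(w_c)+\tilde Q_h$ recorded after Theorem~\ref{t1.1}, the vector fields $N_{REL}$ and $N_{CE}$ agree through the quadratic order driving the Lax--Burgers bifurcation in the distinguished $\obr$-direction. Verifying this matching---specifically, that $N_{REL}-N_{CE}=O(|w_c|^3)$ rather than merely $O(|w_c|^2)$---is the technical heart of the argument and the principal obstacle, for only then does one recover the sharp $\eps^{j+2}$ bound in \eqref{finalbds} rather than the weaker $\eps^{j+1}$ bound that would follow from mere common vanishing at $w_c=0$.

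Granting the matching, the reduced CE system, after projection onto the $\obr$-direction in the nilpotent $J$-block and the standard Burgers rescaling $x\mapsto\eps x$, $\lambda_p=\eps\tilde\lambda$, is governed by the approximate Burgers equation \eqref{burgers} with $\delta=\obr\cdot D_*\obr>0$; its saddle--saddle heteroclinic connection produces the Majda--Pego profile $u_{CE}$ with $\lambda_p(u_{CE})$ monotone decreasing and exponential rate $\mu\eps$. Applying the same analysis to $N_{REL}$ yields $u_{REL}$ with identical leading-order dynamics, and the discrepancy bounds \eqref{finalbds} then follow from a Gronwall comparison of the two Burgers-scaled flows over the relevant $O(\eps^{-1})$ $x$-scale, combined with the relation $\Xi(w_c) = v_*(u) + O(|w_c|^2)$ linking the center graph to the equilibrium graph $v_*$. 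Uniqueness up to translation, together with the exponential decay to the endstates $u_\pm$, is then a standard consequence of hyperbolicity of the rescaled rest points in the Burgers dynamics on $\cM_\rmc$.
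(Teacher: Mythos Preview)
Your overall architecture---center-manifold reduction, Burgers-type normal form, profile comparison---matches the paper's, but your justification of the crucial quadratic-matching step is incorrect, and the paper's route around it is quite different.

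You claim $N_{REL}-N_{CE}=O(|w_c|^3)$ because ``in both cases the center graph $\Xi$ is computed order-by-order from the same defining relation $\Gamma_0\Xi'(w_c)(Jw_c+\tilde Q_c)=-\Xi(w_c)+\tilde Q_h$.'' This is false: the Chapman--Enskog system \eqref{ce2} has its own, \emph{finite-dimensional and invertible}, $\Gamma_0$ and its own $\tilde Q_h$, so the two center graphs are \emph{not} governed by the same relation. Thus your stated mechanism for quadratic matching fails, and with it the sharp $\eps^{j+2}$ bounds you want.

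The paper sidesteps this entirely. It first exploits the $r$-dimensional conservation law \eqref{cons} to observe that $r$ of the $r{+}1$ center coordinates are \emph{constants of motion} $(v_1,\tilde q)$, reducing the center dynamics of \emph{both} systems to a \emph{scalar} ODE in $u_1$ along one-dimensional fibers (Lemma~\ref{t1.2}). The quadratic coefficient $\Lambda$ of this scalar flow is then determined not by comparing Taylor expansions of center graphs but by the observation that the equilibria of the reduced scalar ODE must coincide with the Rankine--Hugoniot equilibria \eqref{rh}, which are \emph{identical} for \eqref{steady} and \eqref{ce2}; this forces the same $\Lambda$ (and the paper computes $\delta=\obr\cdot D_*\obr$ directly). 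The profile comparison is then carried out not by Gronwall between $N_{REL}$ and $N_{CE}$ but by comparing \emph{each} scalar profile to an exact Burgers shock via Lemma~\ref{lburgers} and the triangle inequality. Finally, the slaved components are handled by noting that the graph maps $\Psi,\Psi_{CE},\Phi,v_*$ agree at the \emph{common} endstates $u_1^\pm$, so their differences are $O(|u_{REL,1}-u_1^\pm|)$---a pointwise endpoint argument rather than a dynamical one.
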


\subsection{Discussion and open problems}\label{s:discussion}
Corollary \ref{c1.3} recovers under slightly weakened assumptions, the result of \cite[Prop. 5.4]{MZ},
which, applied to Boltzmann's equation, in turn recovers and sharpens
the fundamental result \cite{CN} of existence of small-amplitude Boltzmann shocks with standard, square-root 
Maxwellia-weighted $L^2$ norm in velocity \cite{G}.
With further effort, one may show \cite[Prop. 1.8]{PZ2} (not stated here) that the center manifold of Theorem \ref{t1.1},
hence also the small-amplitude shock profiles obtained of Corollary \ref{c1.3},
are contained in a stronger space of near Maxwellian-weighted $L^2$ norm in velocity, 
recovering the strongest current existence result for Boltzmann shocks \cite[Thm. 1.1]{MZ}, 
plus the additional dynamical information of \eqref{burgers} and monotonicity of $\lambda_p(u_{REL}(x))$-
neither available by the Sobolev-based fixed point iteration arguments of \cite{CN,MZ}.

To our knowledge, {Theorems \ref{t1.3} and \ref{t1.1} are the first results on existence of invariant manifolds} for any
system of form \ref{Relax}, (H1)-(H2) in either Hilbert or Banach space setting,
{in particular for the steady Boltzmann equation with hard sphere potential}.
Liu and Yu \cite{LiuYu} have studied existence of invariant manifolds for Boltzmann's equation
in a weighted $L^\infty$ (in both velocity and $x$) Banach space setting, using
rather different methods of time-regularization and detailed pointwise bounds,
pointing out that monotonicity of $\lambda_p(\bar u)$ follows from center manifold reduction
and describing physical applications of center manifold theory
to condensation and subsonic/supersonic transition in Milne's problem.
However, their claimed linearized bounds, based on exponential dichotomies, 
hence also their arguments for existence of invariant manifolds, were incorrect \cite{Z6}; see Remark \ref{LYrmk}.
Our results among other things repair this gap, validating their larger program/physical conclusions.

A longer term program is to develop further dynamical systems tools for kinetic systems \eqref{Relax} with 
structure (H1)-(H2), sufficient to treat {\it time-evolutionary stability} of shock and boundary layers by the 
methods used for viscous/relaxation shocks in \cite{GZ,MasZ2,Z2,Z3,Z4,Z5,ZH,ZS}.
Besides unification/simplification, this approach has the advantage of applying in principle to multi-dimensional and/or
large-amplitude waves, each of these long-standing open problems in the area.

These techniques have the further advantages of separating the issues of existence, spectral stability, 
and linearized/nonlinear stability, with the first two often treated by a combination of analytical and numerical methods,
up to and including (see, e.g., \cite{Ba,BZ}) interval arithmetic-based rigorous numerical proof.
The development of numerical and or analytical methods for the treatment of existence of large-amplitude
kinetic shocks we regard as a further, very interesting open problem.

Indeed, the {\it structure problem} discussed by Truesdell, Ruggeri, Boillat, and others, of existence and description of large-amplitude Boltzmann shocks, is perhaps {\it the} fundamental open problems in the theory, and
one of the main motivations for their study.
As discussed, e.g., in \cite{BR}, Navier-Stokes theory
well-describes behavior of shocks of Mach number $M\lessapprox 2$, but inaccurately predicts shock width/structure
at large Mach numbers; by contrast, Boltzmann's equation (numerically and via various formal approximations) appears to 
match experiment in the large-$M$ regime.

\section{Reductions and main example}\label{s:reduction}
We begin by carrying out various reductions, first from Boltzmann's equation to the abstract form \eqref{steady}, (H1)-(H2),
then the abstract equation to the canonical form \eqref{canon}.

\subsection{Boltzmann's equation}\label{s:boltz}
(Following \cite{MZ})
Our main interest is Boltzmann's equation with hard-sphere potential (or Grad hard cutoff potential as in \cite{CN}):
\be\label{Boltz}
f_t + \xi_1 \partial_x f=  \cQ(f,f),
\ee
where $f(x,t,\xi)\in \R$ is the distribution of velocities $\xi\in \R^3$ at $x$, $t\in \R$, and
\begin{equation}
\label{colop}
\cQ (g, h) := \int \big( g( \xi') h (\xi'_*)  -  g(\xi) h(\xi_*) \big) C(\Omega, \xi - \xi_*) d \Omega d\xi_* 
\end{equation}
is the collision operator, with collision kernel $C (\Omega, \xi) = \big| \Omega \cdot \xi \big|$ for hard-sphere case. 

The space of \emph{collision invariants} $\langle \psi \rangle $, $ \int_{\R^3}\psi(\xi)\cQ(g, g)( \xi)d\xi \equiv 0   $,
of \eqref{Boltz} is spanned by
\begin{equation}
\label{defR}
Rf: = \int \Psi (\xi ) f( \xi ) d\xi   \in \R^5 , \qquad 
\Psi (\xi ) = (1 ,\xi_1 , \xi_2 , \xi_3 , \mez | \xi|^2 )^T.
\end{equation}
(Here, we are assuming that distributions $f(x,t,\cdot)$  are confined to a space $\bH$
to be specified later such that the integral converges.)
The associated {macroscopic (fluid-dynamical) variables} are 
\begin{equation}\label{fvar}
\rmu  := Rf=: (\rho , \rho v_1 , \rho v_2 ,  \rho v_3 , \rho E )^T,
\end{equation} 
where $\rho$ denotes density, $v=(v_1,v_2,v_3)$ velocity,
$E= e +  \mez | v|^2 $ total energy density, and $e$ internal energy density.
The set of \emph{equilibria} ($\ker \cQ$) consists of the \emph{Maxwellian distributions}:
	\begin{equation}\label{max}
M_\rmu(\xi)  = \frac{\rho}{\sqrt{(4\pi e/3)^3}}  
e^{-  \frac{| \xi - v |^2}{4 e/3} } . 
\end{equation}

\subsubsection{Symmetry, boundeness, and spectral gap}\label{s:sym}
{Boltzmann's $H$-theorem} \cite{G,Gl,Ce} (equivalent to existence of a thermodynamical entropy in the sense of
\cite{CLL}) asserts the variational principle
$$
\int \log f \cQ(f,f) d\xi \le 0 ,
$$
with equality on the set of Maxwellians $\underline M$.
Taylor expanding about a local maximum $\underline M$, we obtain symmetry and nonnegativity of the Hessian
$ \int \underline M^{-1} ( \partial \cQ|_{\underline M} h) h d\xi\le 0. $
giving symmetry and nonnegativity of $\partial \cQ|_{\underline M} $ on the space $\bH$ defined by 
the square root Maxwellian-weighted norm
\be\label{mnorm}
\|f\|_{\bH}:=\|f \underline M^{-1/2}\|_{L^2(\R^3)}.
\ee

Making the coordinate change
\be\label{change}
\bu= \langle \xi \rangle^{1/2}f, \quad Q(\bu): =  \langle \xi^{-1/2} \rangle^{-1}\cQ(\langle \xi\rangle^{-1/2}\bu), 
\qquad \langle \xi\rangle:=\sqrt{1+|\xi|^2}, 
\ee
and definining multiplication operators $A^0=\langle \xi\rangle^{-1}$ and $A=\xi_1/\langle \xi\rangle$,
we find that \eqref{Boltz} may be put in form \eqref{Relax}, for $\bu\in \bH$, with $A^0$, $A$ evidently
symmetric and bounded, $A^0>0$, and $Q'(\obu)$ symmetric nonpositive
at any equilibrium $\obu= \langle \xi\rangle^{1/2}\underline M$.
By \cite[Cor. 2.4]{MZ}, $Q$ is bounded as a bilinear map on $\bH$.
Moreover, by \cite[Prop. 3.5]{MZ}, $Q'(\obu)$ is negative definite with respect to $\bH$ on its range, this
last being a straightforward consequence of Carleman's theorem \cite{C} that 
$\partial \cQ|_{\underline M} $ acting on $\bH$ may be decomposed as the sum of a multiplication operator
$\nu(\xi)\sim \langle -\xi\rangle$ and a compact operator $K$, whence $Q'(\obu)$ is the sum of a multiplication
operator $\tilde \nu(\xi)\sim -1$ and the compact operator 
$\tilde K=\langle \xi\rangle^{-1/2} K\langle \xi\rangle^{-1/2}$, Weyl's Theorem
thereby implying existence of a spectral gap.

Collecting information, we find that we have reduced to a system of form \eqref{Relax} satisfying (H1)-(H2),
with $\bV:=\langle \xi\rangle^{1/2}(\Range R)^\perp$, $R$ as in \eqref{defR}, $\dim \bV^\perp=5$, and
$\obu=\langle \xi\rangle^{1/2}\underline M$ for any Maxwellian $\underline M$. 
Note that $A$ has no kernel on $\bH$, but essential spectra 
$\xi_1/\langle \xi\rangle \to 0$ as $\xi_1\to 0$: an {\it essential singularity}.
A consequence is that {\it small velocities $\xi_1\to 0$ constitute the main difficulties
in our analysis,} large-velocities issues having been subsumed in the reduction \cite{MZ} to form \eqref{Relax}. 

\subsubsection{Hydrodynamic limit}\label{s:hydro}
The formal Chapman-Enskog expansion \eqref{ce2}, or hydrodynamic limit, being independent of coordinate representation, is the
same in our variables $\bu$, $Q$ as in the standard Boltzmann variables $f$, $\cQ$.
As computed, e.g., in \cite{Ce,LiuYu}, this appears in fluid variables \eqref{fvar} as the 
{\it compressible Navier-Stokes} equations with temperature-dependent viscosity and heat conduction:
\be\tag{cNS}\label{cns}
\begin{aligned}
\rho_t + (\rho v_1)_x&=0,\\
	(\rho v_1)_t + (\rho v_1^2 + p)_x&= ((4/3)\mu v_{1,x})_x ,\\
	(\rho _2)_t + (\rho v_1v_2)_x&= (\mu v_{2,x})_x ,\\
	(\rho _3)_t + (\rho v_1v_3)_x&= (\mu v_{3,x})_x ,\\
(\rho E)_t + (\rho v_1 \rho E + v_1 p)_x&= ( \kappa T_x + (4/3)\mu v_1 v_{1,x} )_{x},\\
\end{aligned}
\ee
where $T$ denotes temperature, with monatomic equation of state $p=\Gamma \rho e$, $T=c_v^{-1} e$, with 
\be\label{mukappa}
\Gamma= 2/3,\quad  c_v=3/4, \quad
\mu=\mu(T)= (5/16)\sqrt{T/\pi} ,\quad \kappa=\kappa(T)=(75/16)\sqrt{T/\pi}.
\ee

As computed in, e.g., \cite{Sm}, the hyperbolic (i.e., lefthand side) part of \eqref{cns} has characteristics
\be\label{bchar}
\lambda_1= v_1-c, \quad \lambda_2=\lambda_3=\lambda_4= v_1, \quad \lambda_5= v_1+c, 
\ee
where $c:=\sqrt{\Gamma(1+\Gamma) e}>0$ denotes sound speed, with ``acoustic modes'' $v_1\pm c$ simple 
 and satisfying \eqref{gnl}, and ``entropic/vorticity modes'' $v_1$ multiplicity three and linearly degenerate in
the sense of Lax \cite{La,Sm} (not addressed here; see \cite{PZ2} for discussion of the linearly degenerate case).

\subsection{Macro-micro decomposition}\label{s:macro}
Next, starting with form \eqref{steady}, (H1)-(H2), coordinatize as in Section \ref{s:ce}
$\bu$ as $(u,v)$, $u=P_{\bV^\perp}\bu$, $v=P_{\bV}\bu$, where $P_{\bV^\perp}$ and $P_{\bV}$ are the orthogonal
projections associated with orthogonal decomposition $\bH=\bV^\perp \oplus \bV$, to obtain the block decomposition
	\be\label{mm}
	\bp A_{11}& A_{12}\\A_{21}& A_{22} \ep \bp u\\v\ep'= \bp 0 & 0\\ 0 & E\ep \bp u\\v\ep +\bp 0\\f\ep,
	\ee
into ``macro'' and ``micro'' variables $u$ and $v$ similarly as in \cite{LiuYu,MZ},
with forcing term $f=B(\bu,\bu)$, where $B$ is a bounded bilinear map and $E<0$ is symmetric negative definite on $\bH$.
The following further reduction greatly simplifies computations later on; {hereafter we take $E=-\Id$}.

\begin{obs}\label{idobs}
By the change of variables $v\to (-E)^{1/2}$ combined with left-multiplication of the $v$-equation by $(-E)^{-1}$, 
we may take without loss of generality $E=\Id$.
\end{obs}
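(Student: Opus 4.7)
The plan is to perform an explicit linear change of variables on the micro component $v$ and to verify that every structural hypothesis survives. By (H2)(ii) the operator $E:\bV\to\bV$ is bounded, self-adjoint, and satisfies $E\le -\delta I_\bV$ with $\delta>0$, so $-E$ is a bounded self-adjoint operator bounded below by $\delta I_\bV$. The bounded functional calculus then provides a bounded, self-adjoint, positive square root $(-E)^{1/2}$ with bounded inverse $(-E)^{-1/2}$, and these are the only new objects I need.

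First, I would set $\tv := (-E)^{1/2} v$, equivalently $v = (-E)^{-1/2}\tv$, and substitute into the block system \eqref{mm}. In the macro block this merely replaces $A_{12}$ by $\tilde A_{12}:=A_{12}(-E)^{-1/2}$. In the micro block the source $Ev$ becomes $E(-E)^{-1/2}\tv=-(-E)^{1/2}\tv$; left-multiplying the resulting equation by $(-E)^{-1/2}$ rescales the coefficients to $\tilde A_{21}:=(-E)^{-1/2}A_{21}$, $\tilde A_{22}:=(-E)^{-1/2}A_{22}(-E)^{-1/2}$ and the forcing to $\tilde f:=(-E)^{-1/2}f$, while collapsing the right-hand side to $-\tv+\tilde f$. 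This is precisely the block form \eqref{mm} with $E$ replaced by $-\Id$.

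It then remains to check that (H1) and (H2) are preserved by this transformation. Self-adjointness of the new block operator $\tilde A$ follows from self-adjointness of $(-E)^{\pm 1/2}$ and of $A$: in particular $\tilde A_{12}^T = (-E)^{-1/2}A_{12}^T = (-E)^{-1/2}A_{21} = \tilde A_{21}$, and symmetry of $\tilde A_{22}$ is immediate from its sandwich form. Boundedness and injectivity of $\tilde A$ follow from those of $A$ together with bounded invertibility of $(-E)^{\pm 1/2}$. The forcing term, re-expressed in the new variables, is the bounded symmetric bilinear map into $\bV$ obtained by composing $B$ with the bounded isomorphism $(u,\tv)\mapsto (u,(-E)^{-1/2}\tv)$ in each slot and then post-composing with $(-E)^{-1/2}$, so (H1)(ii) persists; the new principal part on $\bV$ is $-\Id$, trivially satisfying the spectral gap in (H2)(ii). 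I do not anticipate a genuine obstacle: every ingredient is a routine application of the bounded functional calculus for $-E$. The only point requiring care is that $(-E)^{-1/2}$, though bounded, must act on elements of $\bV$, which is automatic because both the micro equation and $f$ take values in $\bV$.
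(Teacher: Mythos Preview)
Your argument is correct and is precisely the approach indicated in the observation itself; the paper offers no further proof. Note that your left-multiplication by $(-E)^{-1/2}$, rather than the $(-E)^{-1}$ printed in the statement, is what actually preserves self-adjointness of the block operator $A$ (needed downstream for symmetry of $\Gamma_0$ in \eqref{Gamma0}), and your conclusion $E\to -\Id$ matches the surrounding text (``hereafter we take $E=-\Id$'') despite the ``$E=\Id$'' typo in the displayed observation.
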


\subsection{Reduction to canonical form}\label{s:canred}
Since $A$ and $P_{|\bV^\perp}$ are self-adjoint on $\bH$, 
$A_{11}=P_{\bV^\perp}A_{|\bV^\perp}$ is self-adjoint on $\bV^\perp$, hence
$\bV^\perp=\ker A_{11}\oplus\im A_{11}$. 
Denote by $P_{\ker A_{11}}$ and $P_{\im A_{11}}$ the associated orthogonal projections onto $\ker A_{11}$ and $\im A_{11}$,
and $\tA_{12}:\bV\to\im A_{11}$ and $T_{12}:\bV\to\ker A_{11}$ the operators defined by $\tA_{12}=P_{\im A_{11}}A_{12}$ and $T_{12}=P_{\ker A_{11}}A_{12}$. 
From the assumption that $A$ is one-to-one, we readily obtain the following; see \cite[Lemma 2.1]{PZ2} for details.

\begin{lemma}\label{r2.1} Assuming (H1)-(H2),
(i) $\ker T_{12}^*=\{0\}$, $\im T_{12}=\ker A_{11}$, $\ker T_{12}\ne\{0\}$, and
(ii) The linear operator $\tA_{11}=(A_{11})_{|\im A_{11}}$ is self-adjoint and invertible on $\im A_{11}$.
\end{lemma}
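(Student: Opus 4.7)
The plan is to first dispatch part (ii) as a purely finite-dimensional observation, then derive part (i) by extracting what one-to-one\-ness of $A$ says about the off-diagonal block $A_{12}$ relative to the splitting $\bV^\perp=\ker A_{11}\oplus \im A_{11}$.

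Part (ii) is essentially automatic. By (H1)(i), $A$ is self-adjoint, so with respect to the orthogonal decomposition $\bH=\bV^\perp\oplus\bV$ we have $A_{11}^*=A_{11}$, $A_{22}^*=A_{22}$, $A_{21}=A_{12}^*$. Since $\dim\bV^\perp<\infty$, the self-adjoint operator $A_{11}$ on the finite-dimensional space $\bV^\perp$ admits an orthogonal eigendecomposition, and in particular $\bV^\perp=\ker A_{11}\oplus\im A_{11}$ with $\tA_{11}=(A_{11})_{|\im A_{11}}$ self-adjoint and bijective on $\im A_{11}$. This gives (ii).

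For (i) the central point is $\ker T_{12}^*=\{0\}$. I would argue as follows: suppose $z\in\ker A_{11}$ satisfies $T_{12}^*z=0$. For any $v\in\bV$, decompose $A_{12}v=\tA_{12}v+T_{12}v$ with $\tA_{12}v\in\im A_{11}$ and $T_{12}v\in\ker A_{11}$. Since $z\in\ker A_{11}=(\im A_{11})^\perp$,
\[
\langle z,A_{12}v\rangle=\langle z,T_{12}v\rangle=\langle T_{12}^*z,v\rangle=0.
\]
Hence $A_{21}z=A_{12}^*z=0$. Combined with $A_{11}z=0$ this gives $A(z,0)^T=(A_{11}z,A_{21}z)^T=0$, and (H1)(i) (one-to-one) forces $z=0$.

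The remaining assertions in (i) are then bookkeeping. Because $T_{12}:\bV\to\ker A_{11}$ maps into a finite-dimensional space, the standard duality $(\im T_{12})^\perp=\ker T_{12}^*$ (orthogonal complement taken inside the finite-dimensional codomain $\ker A_{11}$) together with $\ker T_{12}^*=\{0\}$ gives $\im T_{12}=\ker A_{11}$. Finally, $\bV$ must be infinite-dimensional: if $\bH$ were finite-dimensional, a self-adjoint one-to-one operator would automatically be boundedly invertible, contradicting (H1)(i); since $\dim\bV^\perp<\infty$, this forces $\dim\bV=\infty$. Therefore $T_{12}$ maps an infinite-dimensional space into the finite-dimensional $\ker A_{11}$, so $\ker T_{12}$ is infinite-dimensional, and in particular $\ker T_{12}\neq\{0\}$.

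There is no real obstacle here; the only nontrivial step is the one-to-one argument for $\ker T_{12}^*=\{0\}$, which is what makes essential use of (H1)(i). Everything else is finite-dimensional linear algebra or a dimension count.
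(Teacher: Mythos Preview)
Your argument is correct and matches the paper's indicated approach: the paper does not spell out a proof but notes that the lemma follows ``from the assumption that $A$ is one-to-one'' and refers to \cite[Lemma 2.1]{PZ2}, which is exactly the mechanism you use for the key step $\ker T_{12}^*=\{0\}$; the remaining points are, as you say, finite-dimensional linear algebra and a dimension count.
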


Introduce now orthogonal subspaces $\bV_1=\im T_{12}^*$ and $\tbV=\ker T_{12}$ decomposing $\bV$,
with associated projectors $P_{\bV_1}$ and $P_{\tbV}$.
Denoting 
\be\label{coordn}
u_1=P_{\ker A_{11}}u, \quad  \tu=P_{\im A_{11}}u, \quad v_1=P_{\bV_1}v, \quad \hbox{\rm and $\tv=P_{\tbV}v$,}
\ee 
and applying $P_{\ker A_{11}}$ and $P_{\im A_{11}}$ to the first equation of \eqref{mm} we obtain 
\begin{equation}\label{u1-tildeu}
T_{12}v'= 0,\quad\tA_{11}\tu'+\tA_{12}\tv'= 0.
\end{equation}
Moreover, by $(A_{21})_{|\ker A_{11}}=T_{12}^*$, $(A_{21})_{|\im A_{11}}=\tA_{12}^*$ the second equation of \eqref{mm} is equivalent to
\begin{equation}\label{inter-eq2}
T_{12}^*u_1'+\tA_{12}^*\tu'+A_{22}v'=Ev+f.
\end{equation}
Since $v_1\in\bV_1=\im T_{12}^*$, from \eqref{u1-tildeu} we conclude $v_1'=0$. In addition, since $\tA_{11}$ is invertible on $\im A_{11}$ by Lemma~\ref{r2.1}(ii), we have $\tu'=-\tA_{11}^{-1}\tA_{12}\tv'$. Summarizing, \eqref{u1-tildeu} is equivalent to
\begin{equation}\label{u1-tildeu2}
v_1'=0,\quad  (\tu +\tA_{11}^{-1}\tA_{12}\tv)'=0.
\end{equation}

Next, taking without loss of generality $E=\Id$, we obtain from \eqref{inter-eq2} evidently 
\begin{equation}\label{u1-1}
T_{12}^*u_1'+P_{\bV_1}(A_{22}-\tA_{12}^*\tA_{11}^{-1}\tA_{12})\tv'=-v_1 +P_{\bV_1}f
\end{equation}
and
\begin{equation}\label{prelim-tilde-v}
P_{\tbV}(A_{22}-\tA_{12}^*\tA_{11}^{-1}\tA_{12})\tv'=-\tv+P_{\tbV}Ev_1+P_{\tbV_1}f.
\end{equation}
From Lemma~\ref{r2.1}(i), $(T_{12}^*)^{-1}$ is well-defined and bounded, hence we obtain from \eqref{u1-1}
\begin{equation}\label{u1-2}
(u_1- \Gamma_1\tv)'= -(T_{12}^*)^{-1}v_1 + (T_{12}^*)^{-1}P_{\bV_1}f,
\quad
\Gamma_0\tv'=\tv+P_{\tbV}f,
\end{equation}
where $\Gamma_1=(T_{12}^*)^{-1}(\tA_{12}^*\tA_{11}^{-1}\tA_{12}-A_{22})\in\mathcal{B}(\bV,\ker A_{11})$ and
\be\label{Gamma0}
\Gamma_0=P_{\tbV}(A_{22}-\tA_{12}^*\tA_{11}^{-1}\tA_{12})_{|\tbV}\in\mathcal{B}(\tbV) 
\; \hbox{\rm is symmetric.}
\ee

Summarizing, we have that \eqref{mm} is equivalent to the system
\be\label{sys1}
(u_1- \Gamma_1\tv)'= (T_{12}^*)^{-1}v_1 + (T_{12}^*)^{-1}P_{\bV_1}f, \quad
(\tu + \tA_{11}^{-1}\tA_{12}\tv)'=0, \quad
v_1'=0, \quad
\Gamma_0\tv'=\tv+P_{\tbV}f.
\ee
By the invertible change of coordinates
\ba\label{coord}
w_c=\Big(  ( u_1 - \Gamma_1\tilde v)^T,
(-(T_{12}^*)^{-1}v_1)^T,
(\tilde u + \tilde A_{11}^{-1}\tilde A_{12}\tilde v)^T\Big)^T ,
\quad
w_h=\tilde v,
\ea
we reduce \eqref{mm} finally to the canonical form of Lemma \ref{l:canon}:
\ba\label{canon2}
w_c'=Jw_c + g_c, \quad
\Gamma_0 w_h' = w_h + g_h,
\ea
where $J=\bp 0 & I_m & 0 \\ 0&0 & 0\\ 0&0&0 \ep$ and
$g_c=\tilde Q_c(w,w)$ and $g_h=\tilde Q_h(w,w)$ are bounded bilinear maps.

\begin{obs}\label{fibers}
We record for later that the tangent subspace $(u,v)=(\zeta, 0)$ to equilibrium manifold $\cE=\{(u,v)\in\bV^\perp\oplus\bV: Q(u,v)=0\}$
is given in coordinates \eqref{coord} by
$w_c=(\zeta_1, 0, \tilde \zeta)$, $w_h=0$,
as can also be seen by computing the subspace of equilibria of \eqref{canon} with $g=(g_c,g_h)=0$.
\end{obs}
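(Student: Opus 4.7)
The observation is essentially a bookkeeping check, so my plan is to verify it directly from the explicit formulas rather than invoke any structural theory. First, I would recall from Section~\ref{s:ce} that $\cE$ is tangent to $\bV^\perp$ at $\obu$, so in the decomposition $\bH=\bV^\perp\oplus\bV$ the tangent space consists precisely of vectors of the form $(u,v)=(\zeta,0)$ with $\zeta\in\bV^\perp$. Decomposing $\zeta$ orthogonally according to $\bV^\perp=\ker A_{11}\oplus\im A_{11}$, I set $\zeta_1:=P_{\ker A_{11}}\zeta$ and $\tilde\zeta:=P_{\im A_{11}}\zeta$, so that in the notation \eqref{coordn} we have $u_1=\zeta_1$, $\tilde u=\tilde\zeta$, $v_1=0$, $\tilde v=0$. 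Substituting these values into the block definition \eqref{coord}, the three components of $w_c$ collapse to
$u_1-\Gamma_1\tilde v=\zeta_1$, $-(T_{12}^*)^{-1}v_1=0$, and $\tilde u+\tilde A_{11}^{-1}\tilde A_{12}\tilde v=\tilde\zeta$, while $w_h=\tilde v=0$. This gives exactly $w_c=(\zeta_1,0,\tilde\zeta)$, $w_h=0$.

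For the alternative (consistency) verification mentioned in the statement, I would look at the linearized canonical system obtained by dropping the quadratic terms, namely $w_c'=Jw_c$ and $\Gamma_0 w_h'=w_h$. Equilibria, i.e.\ constant solutions, must satisfy $Jw_c=0$ and $w_h=0$; the latter is forced because $\Gamma_0$ is one-to-one by (H1), and the former, given the nilpotent Jordan form $J=\bp 0 & I_m & 0\\0 & 0 & 0\\0 & 0 & 0\ep$, is equivalent to annihilation of the middle block of $w_c$, the first and third blocks being free. Thus the linear subspace of equilibria is exactly $\{(w_{c,1},0,w_{c,3})\}\times\{0\}$, matching the previous computation.

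I do not foresee a genuine obstacle; the only thing worth flagging is the small dimensional consistency check that the space $(\zeta_1,0,\tilde\zeta)$ has dimension $\dim\ker A_{11}+\dim\im A_{11}=\dim\bV^\perp=r$, agreeing with the known dimension of the tangent space to $\cE$ at $\obu$ recorded in \eqref{eq}.
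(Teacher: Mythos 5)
Your proposal is correct and follows exactly the route the paper intends: the observation is a direct substitution of $(u,v)=(\zeta,0)$ (i.e.\ $v_1=\tilde v=0$, $u_1=\zeta_1$, $\tilde u=\tilde\zeta$) into the coordinate change \eqref{coord}, and the cross-check via equilibria of the linearized canonical system ($Jw_c=0$ forcing only the middle block to vanish, $w_h=0$ forced by injectivity of $\Gamma_0$) is precisely the alternative verification the paper alludes to. The dimensional consistency check at the end is a nice touch but not needed.
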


\section{Linear resolvent estimates}\label{s:linear}
The starting point for construction of invariant manifolds is the study of the solution operator for the decoupled
linear inhomogeneous equations \eqref{canon2} with arbitrary forcing terms $g_c$, $g_h$.
The ``center,'' $w_c$ equation is of standard finite-dimensional type, so may be treated by usual methods.
Evidently, then, the key issue is treatment of the degenerate ``hyperbolic,'' $w_h$ equation.

\subsection{Symmetric degenerate evolution equations} Consider a degenerate inhomogeneous evolution equation
$(\Gamma_0\partial_x + \Id) w_c=g,$
with $\Gamma_0$ (recalling \eqref{Gamma0} and (H1)-(H2)) symmetric and one-to-one but not boundedly invertible,
with the goal to obtain bounds on the resolvent operator 
\be\label{res}
\mathcal{R}:=(\Gamma_0 \partial_x -\Id)^{-1}.  
\ee
As discussed in the introduction, the inhomogeneous flow $u'+\Gamma_0^{-1}u=0$ possesses generalized exponential dichotomies,
but the resulting bounds on $(\partial_x-\Gamma_0^{-1})^{-1}$
are insufficient to bound the inhomogeneous solution operator
$(\Gamma_0 \partial_x-\Id)^{-1}= (\partial_x -\Gamma_0^{-1})^{-1}\Gamma_0^{-1}$.

That is, \eqref{inhom} represents an interesting new class of symmetric degenerate evolution equations
{for which construction of dichotomies is inadequate to bound the resolvent \eqref{res}.}

{A key observation} of \cite{PZ1} is that $L^2$ bounds may be obtained {\it directly}, using symmetry.
In \cite{PZ1}, we use for technical reasons a frequency domain/Fourier transform formulation 
following \cite{LP2,LP3}; however, this can be seen at formal level through
an a priori energy estimate
\be\label{key}
|\langle u, g\rangle|= |\langle u, \Gamma_0 u'\rangle -\langle u, u\rangle |=
|\langle u, u\rangle| = \|u\|^2 \Rightarrow \|u\|\leq C\|g\|,
\ee
reminiscent of Friedrichs estimates for symmetric hyperbolic PDE,
where $\|\cdot\|$ and $\langle \cdot, \cdot \rangle$ denote $L^2$ norm and inner product;
indeed, one could view \eqref{inhom} as a ``symmetric hyperbolic'' analog for ODE.
As in the PDE setting, the crucial property of symmetry of $\Gamma_0$
is guaranteed by existence of a convex entropy for \eqref{Relax} \cite{CLL}, e.g., the Boltzmann H-Theorem 
as discussed in Section \ref{s:sym}.

\subsection{Details/counterexamples} 
Viewing the constant-coefficient operator $\mathcal{R}$,
\eqref{res}, as a Fourier multiplier with symbol $\sR(\omega)=(i\omega \Gamma_0 -\Id)^{-1}$, 
and computing the uniform estimates
\be\label{Rbds}
|\sR(\omega)|\leq C,\quad 
|\sR'(\omega)|=|-\sR i\Gamma_0 \sR|
\leq C_2(1+|\omega|)^{-1},
\ee
we find by the Mikhlin-Hormander multiplier theorem that $\mathcal{R}$ is bounded on $L^p$, $1<p<\infty$.

Further detail may be obtained by spectral decomposition of $\Gamma_0$, converting 
$(\Gamma_0\partial_x + \Id) w_c=g$ into a family of scalar equations
	$(\alpha_\lambda \partial_x-1)u_\lambda=g_\lambda$, with
$u_\lambda$ the coordinate associated with spectrum $ \alpha_\lambda$ and
$\|u\|_{\bH}^2=\int |u_\lambda|^2d\mu_\lambda$.
The associated (scalar) resolvent operators $\mathcal{R}_\lambda=(\alpha_\lambda \partial_x+1)^{-1}$
have explicit kernels 
\be\label{sR}
R_\lambda (\theta)= \alpha_\lambda^{-1}e^{(\theta)/\alpha_\lambda^{-1}},
\, \theta \alpha_\lambda<0;
\qquad
(\mathcal{R}_\lambda h) (x)= \int_{\R} R_\lambda (x-y)h(y)dy,
\ee
that are evidently integrable with respect to $x$, so bounded coordinate-wise on
any $L^p(\R_+)$.
However, explicit example \cite[Eg. 4.7, p. 23]{PZ1} shows that the full operator $\mathcal{R}$ is not
bounded on $L^\infty(\R,\bH)$ (resp. $L^1(\R,\bH)$); that is, {\it it is not an $L^\infty$ (resp. $L^1$)
multiplier.}
This has the important consequence that our dynamical theory must be carried out in $H^1$ (bounding
$L^\infty$) rather than the usual $C^0(\R)$
setting costing a surprising amount of technical difficulty.

The above shows also that the full resolvent kernel $R(\theta)$ determined by \eqref{sR}, considered
as an operator-valued function from $\bH\to \bH$, {\it is not integrable}, since otherwise $\mathcal{R}$ by standard
convolution bounds would be a bounded multiplier on all $L^p$.
Likewise, the computation
\be\label{supcalc}
|R(\theta)|_{\bH}= \sup_{\alpha_\lambda(\theta<0}
|\alpha_\lambda^{-1}e^{-\theta/\alpha_\lambda^{-1}}| \sim  C/|\theta|
\, \hbox{\rm as $\theta\to 0$}
\ee
shows that $|R(\theta)|_{\bH}$ {\it is not bounded}.
This indicates the delicacy of, and cancellation involved in, the bounds on $\mathcal{R}$
obtained above through energy estimate \eqref{key}/resolvent bounds \eqref{Rbds}.

\br\label{emprmk}
We emphasize that $L^p$ multiplier theory/spectral decomposition is used here only to construct counterexamples, 
our construction of invariant manifolds relying on Parseval's identity.
\er

\subsection{The Banach space setting}
(Following \cite{Z6})
Weighted $L^\infty$ spaces $L^\infty_{r,\xi}$ in velocity 
$\xi$, defined by norms $\|f\|_{L^\infty_{r,\xi}}:=\sup_{\xi\in \R^3}(1+|\xi|)^r M(\xi)^{-1/2}|f(\xi)|$, $r\geq 0$,
where $M(\xi)=e^{-c_0|\xi-v|^2}$ is the Maxwellian corresponding to equilibrium $\obu$,
have been used in the study of Boltzmann's equation in, e.g., \cite{LiuYu,LY3}.
Though resolvent bounds appear more difficult to obtain in this context, we can establish 
that {\it $|R(\theta)|_{L^\infty_{r,\xi}}$ is not bounded,} similarly as in the Hilbert case.

Recall \cite{G} that the linearized collision operator $L$ appearing in the
linearized inhomogeneous steady Boltzmann equation $\xi_1 f'- L f= \tilde g$
may be decomposed as $ \tilde L=\tilde{\nu}(\xi) + \tilde {K}$,
where $\tilde \nu(\xi)$ is a multiplication operator with $\tilde \nu(\xi)\sim \langle\xi\rangle$ 
and $\tilde{K}$ has kernel 
$|\tilde k(\xi,\xi_*)|\leq C|\xi-\xi_*|^{-1}e^{-c|\xi-\xi_*|^2}$,
$(\tilde {K}h)(\xi)=\int_{\R^3} \tilde k(\xi,\xi_*)h(\xi_*)d\xi_*.$
By $\||\xi|^{-1}e^{-c|\xi|^2}\|_{L^1}<\infty$ and standard convolution bounds, 
$\tilde{K}$ is bounded on $L^\infty(\xi)$, hence, by 
$\langle \xi\rangle/\langle \xi-\xi_* \rangle\leq C\langle \xi_*\rangle$, on $L^\infty_{r,\xi}$.
In our coordinates \eqref{change}, $A f'- Q'f=g$,
$A= \frac{\xi_1}{\langle\xi\rangle}$, $Q'=-\nu(\xi)+{K}$, where $\nu(\xi)\sim 1$ and 
${ K}=
\langle\xi\rangle^{-1/2}\hat { K}\langle \xi\rangle^{-1/2}$
is bounded from 
$L^\infty_{r,\xi}\to L^\infty_{r,\xi}$.
The reduced equation $\Gamma_0 u'-Eu=g$
of \eqref{inhom} corresponds to the restriction of $g$ to a finite-codimension subspace $\Sigma$ of ``hyperbolic modes,'' 
where $E:=Q'|_\Sigma<0$ \cite{PZ1}.
That 
\be\label{Rinf}
|R(\cdot)|_{L^\infty_{r,\xi}}=\infty
\ee
thus follows (by contradiction, using
standard convolution bounds) from the following slightly stronger statement.

\begin{lemma}[adapted from \cite{Z6}]\label{claim1}
{The solution of $Au'-Q'(\obu)u=g$, 
with data $g$ valued in a finite-codimension subspace $\Sigma$ of $L^\infty_{r,\xi}$
does not satisfy a uniform bound} $|u|_{L^\infty(x,L^\infty_{r,\xi})}\leq C |g|_{L^1(x,L^\infty_{r,\xi})}.$
\end{lemma}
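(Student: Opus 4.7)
The plan is to build an explicit one-parameter family $g_\eps\in\Sigma$, concentrated in velocity at $\xi_1\sim\eps$ and in space near $x=0$, whose corresponding solution $u_\eps$ to $Au'-Q'(\obu)u=g_\eps$ satisfies $\|u_\eps\|_{L^\infty_xL^\infty_{r,\xi}}\gtrsim\eps^{-1}$ while $\|g_\eps\|_{L^1_xL^\infty_{r,\xi}}=O(1)$, thereby precluding any uniform bound. The mechanism is the one already isolated in the Hilbert-space counterexample \cite[Eg.~4.7]{PZ1}: the coordinatewise-in-$\xi$ resolvent kernel of $A(\xi)\partial_x+\nu(\xi)$ has $L^\infty_x$-norm $|A(\xi)|^{-1}=\langle\xi\rangle/|\xi_1|$, which blows up as $\xi_1\to 0$.

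Concretely, first I would drop the compact piece $K$ of $Q'(\obu)=-\nu+K$ and solve the multiplication-only model $A(\xi)u_x+\nu(\xi)u=g$, whose retarded Green's kernel for $\xi_1>0$ is
\begin{equation}
G(x,\xi) = A(\xi)^{-1} e^{-\nu(\xi)x/A(\xi)}\,\mathbf{1}_{x>0}.
\end{equation}
Fixing a smooth velocity cutoff $\chi_\eps$ supported in the slab $\{\xi_1\in[\eps,2\eps],\ \xi_2^2+\xi_3^2\le 1\}$ with $\|\chi_\eps\|_{L^\infty}=1$, setting $\psi_\eps(\xi):=M(\xi)^{1/2}\chi_\eps(\xi)$ (so that $\|\psi_\eps\|_{L^\infty_{r,\xi}}$ is bounded uniformly in $\eps$), and taking a nonnegative spatial bump $\phi_\eps$ of $L^1_x$-mass one supported in $[0,\eps]$ (so that $\phi_\eps$ acts as a delta on the decay scale $A/\nu\sim\eps$), I define $g_\eps(x,\xi):=\phi_\eps(x)\psi_\eps(\xi)$. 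A direct computation gives $\|g_\eps\|_{L^1_xL^\infty_{r,\xi}}=O(1)$, while convolution against $G$ yields $u_\eps(\eps,\xi)\gtrsim A(\xi)^{-1}\psi_\eps(\xi)$, and hence $\|u_\eps\|_{L^\infty_xL^\infty_{r,\xi}}\gtrsim\eps^{-1}$.

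Two issues remain. The first, passing to the finite-codimension subspace $\Sigma$, is mild: letting $\pi\colon L^\infty_{r,\xi}\to\Sigma$ be the bounded projection along the finite-dimensional complement, the correction $(I-\pi)\psi_\eps$ tends to zero in $L^\infty_{r,\xi}$ as $\eps\to 0$ by dominated convergence (the complement is spanned by finitely many fixed functions, while $\psi_\eps\to 0$ pointwise with uniform bound), so $\pi\psi_\eps$ preserves the leading-order blow-up. The second, which I expect to be the main technical obstacle, is incorporating the compact operator $K$. The natural route is a fixed-point rewriting $u=(A\partial_x+\nu)^{-1}(g_\eps+Ku)$, exploiting that $(A\partial_x+\nu)^{-1}$ is bounded on $L^\infty_xL^\infty_{r,\xi}$ (its Green's kernel has $\|G(\cdot,\xi)\|_{L^1_x}=\nu(\xi)^{-1}$ uniformly bounded in $\xi$), so that $u=u_\eps^{(0)}+R u_\eps^{(0)}$ with $R$ a bounded operator on $L^\infty_xL^\infty_{r,\xi}$ and $u_\eps^{(0)}=(A\partial_x+\nu)^{-1}g_\eps$. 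Since $u_\eps^{(0)}$ concentrates its blow-up on the thin velocity slab $\xi_1\sim\eps$ while $K$ smooths in $\xi$, the $\eps^{-1}$ growth should survive; verifying this quantitatively by examining the action of $K$ on the velocity bumps $\psi_\eps$ and controlling the geometric-series remainder is the delicate point.
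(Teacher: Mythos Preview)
Your identification of the mechanism (the scalar kernel of $A\partial_x+\nu$ blows up like $|\xi_1|^{-1}$ near $\xi_1=0$) is exactly right, and your treatment of the finite-codimension projection is fine.  The gap is in the step you yourself flag as ``the delicate point'': passing from the multiplication-only solution $u_\eps^{(0)}$ to the full solution $u_\eps$.  From $u=(A\partial_x+\nu)^{-1}(g_\eps+Ku)$ you only get $u=u_\eps^{(0)}+\mathcal{S}K\,u$, an \emph{implicit} relation; the formula $u=u_\eps^{(0)}+R\,u_\eps^{(0)}$ with $R$ bounded presupposes that $I-\mathcal{S}K$ is invertible on $L^\infty_xL^\infty_{r,\xi}$, which you have not shown (and $\|\mathcal{S}K\|$ is not small).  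Even granting boundedness of $R$, ``$K$ smooths in $\xi$ so the blow-up survives'' is a heuristic, not an estimate: nothing rules out cancellation between $u_\eps^{(0)}$ and $R\,u_\eps^{(0)}$.

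The paper sidesteps this entirely by reversing the direction of the argument.  Rather than pushing blow-up \emph{forward} through $K$, assume for contradiction that the full solution operator satisfies $|u|_{L^\infty_xL^\infty_{r,\xi}}\le C|g|_{L^1_xL^\infty_{r,\xi}}$.  Writing $Au'-Q'(\obu)u=g$ as $(A\partial_x+\nu)u=Ku+g$ and applying the bounded operator $\mathcal{S}=(A\partial_x+\nu)^{-1}$ gives $\mathcal{S}g=u-\mathcal{S}Ku$, hence
\[
|\mathcal{S}g|_{L^\infty_xL^\infty_{r,\xi}}\le (1+\|\mathcal{S}\|\,\|K\|)\,|u|_{L^\infty_xL^\infty_{r,\xi}}\le C'|g|_{L^1_xL^\infty_{r,\xi}}.
\]
This is now an $L^1\to L^\infty$ bound for the \emph{decoupled} operator $\mathcal{S}$, equivalent (take $g\to\delta(x)h(\xi)$) to $\sup_\theta|S(\theta)|_{L^\infty_{r,\xi}}<\infty$, which is immediately refuted by the scalar computation $|S(\theta)|_{L^\infty_{r,\xi}}\sim|\theta|^{-1}$ as $\theta\to 0$.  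Only boundedness of $\mathcal{S}$ and $K$ on $L^\infty$ is used---no smallness, no Neumann series, no tracking of $K$ on concentrated data.  Your constructive route may be completable, but the contradiction argument makes the compact perturbation a non-issue in one line.
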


\begin{proof}
Defining $\mathcal{S}=\big( (\xi_1/\langle \xi\rangle )\partial_x - \nu(\xi) \big)^{-1}$, we have
the explicit solution formula
\be\label{Sform}
(\mathcal{S}g)(x,\xi)= \int_{\R}S_\xi(x-y)g(y,\xi) dy;\qquad
S_\xi(\theta)=(\xi_1/\langle \xi\rangle)^{-1}e^{-\nu(\xi)/(\xi_1/\langle \xi\rangle)^{-1}},
\ee
where scalar kernels $S_\xi(\cdot)$ are integrable, hence $\mathcal{S}$ is bounded on
$L^\infty(x, L^\infty_{r,\xi})= L^\infty_r(\xi, (L^\infty(x)$.
Writing $Au'-Q'(\obu)u=g$ as
$\big( (\xi_1/\langle \xi\rangle )\partial_x - \nu(\xi) \big)u= {K}u + g$, applying 
$\mathcal{S}$, and rearranging, we obtain
$ \mathcal{S}g= u- \mathcal{S}{K}u$, hence $| \mathcal{S}g |_{L^\infty(x,L^\infty_{r,\xi})}\leq C| u |_{L^\infty(x,L^\infty_{r,\xi})}$
by boundedness of $|K|_{L^\infty_{r,\xi}}$, $|\mathcal{S}|_{L^\infty(x,L^\infty_{r,\xi})}$.
Thus,  $|u|_{L^\infty(x,L^\infty_{r,\xi})}\leq C |g|_{L^1(x,L^\infty_{r,\xi})}$ would imply 
$|\mathcal{S}g|_{L^\infty(x,L^\infty_{r,\xi})}\leq C |g|_{L^1(x,L^\infty_{r,\xi})}$, 
or, taking $g\to \delta(x) h(\xi)$, 
$|S(\theta)|_{L^\infty_{r,\xi}}\leq C$ for the full kernel $S$ of $\mathcal{S}$.
But, direct calculation as in \eqref{supcalc} shows 
$|S(\theta)|_{L^\infty_{r,\xi}}\sim |\theta|^{-1}$ as $\theta\to 0$, a contradiction.
\end{proof}

\br\label{LYrmk}
In our notation, the bound asserted in \cite{LiuYu} is
$ |R(\theta )|_{L^\infty_{5/2,\xi}}\leq Ce^{-\beta |x|}, $
in contradiction with \eqref{Rinf}.
We conjecture that $|R(\theta)|_{L^\infty_{r,\xi}} \gtrsim |\theta|^{-1}$ as $\theta\to 0$ similarly as for its principal part $ S(\theta)$,
and similarly as in the Hilbert space setting \eqref{supcalc}, so that
$|R(\cdot )|_{L^\infty_{r,\xi}} \not \in L^p(\R)$ for any $1\leq p\leq \infty$.
\er

\section{$H^1$ stable manifold theorem}\label{s:stable}
We now outline the argument for construction of the stable manifold; for details, see \cite{PZ1}.

\begin{proof}[Proof of Theorem \ref{t1.3}]
For clarity, we first treat the {\it noncharacteristic case} $m=0$, $\dim w_c=\dim \cE=r$, for which \eqref{canon} becomes
$ w_c\equiv \const$, $(\Gamma_0 \partial_x + \Id) w_h= \tilde Q_c(w)$,
and the equation for the stable manifold reduces to $w_c\equiv 0$ and 
$(\Gamma_0 \partial_x + \Id) w_h= B(w_h,w_h)$, $B(w_h,w_h):=\tilde Q_c((0,w_h)$ a bounded bilinear map. 
Inverting, we deduce (see \cite{PZ1}) the fixed-point formulation
\be\label{fixedstable}
u(\tau)= T_S(\tau) \Pi_S u_0 + ( \Gamma_0 \partial_x-\Id)^{-1} B(u,u)(\tau),
\ee
where $\Pi_S$, $T_S$ denote projection and semigroup associated with the stable
subspace of homogeneous flow $\Gamma_0 u'=-u$, so that 
$T_S(\tau) \Pi_S u_0$ is a homogeneous solution with data $\Pi_Su_0$ lying in the
stable subspace at $\tau=0$, and $\Pi_Su(0)=\Pi_Su_0$.
This can be recognized as a concise, frequency-domain version of the usual variation
of constants formula for finite-dimensional ODE.

However, significant new difficulties arise from the fact that, due to the properties
of $(\Gamma_0 \partial_x+\Id)^{-1}$ described in Section \ref{s:linear}, we must carry out the analysis in
weighted $H^1$ rather than standard $L^\infty$ spaces.
For example, for the unbounded formal generator $-\Gamma_0^{-1}$, the $H^1$-stable subspace
is strictly contained in the $L^2$-stable one, so that we must seek a graph not over the
entire stable subspace but only the $H^1$ part, conveniently
conveniently characterized as $\dom(\Pi_S (-\Gamma_0)^{-1/2})$.
Moreover, differentiating the equation gives
$u'(\tau)= T_S'(\tau) \Pi_S (u_0 -B(u(0),u(0)))
( \Gamma_0 \partial_x-\Id)^{-1} B(u,u)'(\tau)$
(noting $u'(0)= -\Gamma_0^{-1} \big( u(0)-B(u(0),u(0)) \big)$) by the equation) 
so that the ``homogeneous term'' involving $T_S'$ lies in $L^2$ 
when $v_0:=u_0 -B(u(0),u(0))$, not $u_0$, lies in the $H^1$-stable subspace.

Our solution is to introduce the {\it modified fixed-point equation}
\be\label{modfixedstable}
u(\tau)= T_S(\tau) \Pi_S\big( v_0-B(u(0),u(0)) \big) + ( \Gamma_0 \partial_x-\Id)^{-1} B(u,u)(\tau)
\ee
parametrized by elements $v_0$ in the $H^1$-stable
subspace, for which the derivative equation is the harmless
$u'(\tau)= T_S'(\tau) \Pi_S v_0+ ( \Gamma_0 \partial_x-\Id)^{-1} B(u,u)'(\tau)$. Observing that
the trace $u\to u(0)$ is bounded on $H^1$ by 1D Sobolev embedding, as is $(\Gamma_0\partial_x + \Id)^{-1}$
by $L^2$-boundedness plus commutation of constant coefficient operators with derivatives, we find that 
\eqref{modfixedstable} is contractive, yielding existence/uniqueness in $H^1$ (and
exponentially weighted $H^1$) norm, and thereby existence of an (exponentially decaying)
stable manifold expressed as a graph over the $H^1$ stable subspace, Fr\'echet-differentiable
from $\dom(-(\Pi_S \Gamma_0)^{-1/2})$ with norm induced by
$(-\Pi_S \Gamma_0)^{-1/2}$ to the full space $\bH$ with its original norm.
A novel aspect is that {the graph lies above the $H^1$-stable subspace not only in unstable
directions, but also in stable directions lying in the stable but not $H^1$-stable subspace.}

In the noncharacteristic case, there is a nontrivial center equation $w_c'=Jw_c + B_c(w,w)$, coupled
to the hyperbolic equation $\Gamma_0 w_h'=-w_h + B_h(w,w)$. 
This may be treated, setting $w=(z,u)$, by the larger fixed-point equation
appending to \eqref{modfixedstable} a standard finite-dimensional $z$ equation:
\ba\label{fixsys}
z(\tau)&= -\int_\tau^{+\infty} e^{J(\tau-\theta)}B_c(w,w)(\theta)d\theta,\\
u(\tau)&= T_S(\tau) \Pi_S\big( v_0-B_h(u(0),u(0)) \big) + ( \Gamma_0 \partial_x-\Id)^{-1} B_h(w,w)(\tau).
\ea
\end{proof}

\begin{proof}[Proof of Corollary \ref{c5.9}]
Because the stable manifold contains the forward orbits of all solutions with $H^1(\R_+,\bH))$ norm sufficiently small,
it contains the orbit on $\R_+$ of $\check{\bu}_M:=\check{\bu}(\cdot + M)$ for $M$ sufficiently large,
whence $\check{\bu}_M\in H^1_{\tilde \nu}(\R_+,\bH)$ by Theorem \ref{t1.3}.
It follows that $e^{\tilde \nu |\cdot|}\bu_M \in H^1(\R_+,\bH)$, hence, by Sobolev embedding, 
$|e^{\tilde \nu |x|}\bu(x)|\leq C$, or
$|\bu(x)|\leq C|e^{\tilde \nu |x|}$, for $x\geq M$.
\end{proof}

\br\label{stablermk}
The key technical points in the above construction are the use of $H^1$ rather than sup norms 
to bound the resolvent, and the ``integration by parts'' parametrization by $v_0$ in \eqref{modfixedstable}.
\er

\section{Existence of a center manifold}\label{s:cm}
Next, we outline the argument for existence of a an $H^1$ center manifold; for details, see \cite{PZ2}.
The translation from standard $C^0$ to $H^1$ framework again introduces interesting new difficulties:
surprisingly, different from those encountered in the stable manifold case.

\begin{proof}[Proof of Theorem \ref{t1.1}]
Following the standard approach to construction of center manifolds \cite{VI,B,Z1}, 
we first replace $\tilde Q$ by a truncated nonlinearity
$N_\eps(w):=\rho(w/\eps)\tilde Q(w)$,
where $\rho$ is a smooth cutoff function equal to $1$ for $|w|\leq 1$ and $0$ for $|w|\geq 2$.
The truncated nonlinearity satisfies bounds
\be\label{Nbds}
|N_\eps| \leq \rmc\eps^2, \quad
|N_\eps'|\leq \rmc\eps, \quad
|N_\eps''| \leq \rmc \qquad
(|\cdot|=|\cdot|_\bH),
\ee
and agrees with the original one locally to $\obu$.

Translating the usual sup-norm approach to the $H^1$ setting, we 
seek solutions to the modified (truncated) equation in a negatively weighted space
$H^1_{-\alpha}$, for $\alpha>0$ sufficiently small.
Similarly as in \eqref{fixedstable}, this yields the fixed point formulation
\be\label{cfixed}
w(\tau)= T_\rmc(\tau) \Pi_\rmc w_0  +\int_0^\tau T_\rmc(\tau -\theta) \Pi_\rmc N_\eps(w(\theta))d\theta
+ ( \Gamma_0 \partial_x+ \Id)^{-1} \Pi_\rmh N_\eps(w)(\tau)
\ee
for solution $w=(w_\rmc^T, w_\rmh^T)^T$, where $\Pi_\rmc$ denotes projection onto the $w_c$ component,
$T_\rmc(\cdot)=e^{J (\cdot)}$ the associated (nondegenerate) flow,
and $\Pi_\rmh$ denotes projection onto the $w_\rmh$ component.

The difficulty in this case is not with the ``homogeneous'' term $T_\rmc(\tau) \Pi_\rmc w_0$ as in the stable
manifold case (since derivatives on $\Sigma_\rmc$ are bounded) nor 
$\int_0^\tau T_\rmc(\tau -\theta) \Pi_\rmc N_\eps(w(y))dy$,
but the formerly harmless $ ( A \partial_x-\Id)^{-1} \Pi_H N_\eps(w,w)(\tau)$, specifically, 
the ``substitution operator'' $\mathcal{N}_\eps: w\to N_\eps(w)$. Bounds \eqref{Nbds}
yield readily that \eqref{cfixed} is contractive in $L^2_{-\alpha}$ and bounded in $H^1_{-\alpha}$, 
$\|f\|_{H^s_{-\alpha}}:=\|e^{-\alpha \langle \cdot \rangle} f(\cdot)\|_{H^s}$,
giving existence and uniqueness of a $C^{0+1/2}$ center manifold $\Pi_\rmc w_0\to w(0)$
via the trace map $w\to w(0)$ and the 1-d Sobolev estimate 
$|f(0)|\leq \|f\|_{L^2_{-\alpha}}^{1/2}\|\partial_x f\|_{L^2_{-\alpha}}$.

However, higher (even Lipshitz) regularity seems to require contraction in $\|\cdot\|_{H^1_{-\alpha}}$,
the difficulty lying in term
$$
 \|\partial_x(N_\eps(v_1)-N_\eps(v_2))\|_{L^2_{-\alpha}}
\sim \| \max_j (|N_\eps''(v_j)||\partial_x v_j|)|v_2-v_1| \|_{L^2_{-\alpha}} 
\sim  \sum_j \| |\partial_x v_j||v_2-v_1| \|_{L^2_{-\alpha}} , 
$$
for which the obvious Sobolev embedding estimate gives 
$  \|v_1-v_2\|_{H^1_{-\alpha}} \sum_j(\int_\R |\partial_x v_j|^2)^{1/2}=+\infty$.

{A key observation} is that, for $0<\alpha_1\ll \alpha \ll \alpha_2\ll 1$, 
\eqref{cfixed} is contractive in the {\it mixed norm}
\be\label{mixed}
\|f\|:=\|f\|_{L^2_{-\alpha}}+ \|\partial_x f\|_{L^2_{-\alpha_2}}
\ee
and bounded in $H^1_{-\alpha_1}$ for $\|w\|_{H^1_{-\alpha_1}}\ll 1$.  For, the Sobolev bound
$$
e^{-2\alpha_2 \langle x \rangle}|f(x)|^2 \leq \|f\|_{L^2_{-\alpha_2}(x,\infty)} \|\partial_x f\|_{L^2_{-\alpha_2}(x,\infty)}
\leq e^{-(\alpha_2-\alpha)\langle x \rangle} \|f\|_{L^2_{-\alpha}(x,\infty)} \|\partial_x f\|_{L^2_{-\alpha_2}(x,\infty)}
	$$
gives	
\ba\label{obs}
		&\|\partial_x(N_\eps(v_1)-N_\eps(v_2))\|_{L^2_{-\alpha_2}}^2 \lesssim
\int_\RR e^{-2\alpha_2\langle x\rangle}|v_1(x)-v_2(x)|^2|\pa_xv_1(x)|^2 dx\\
&\qquad \lesssim \Big(\int_{\RR} e^{-(\alpha_2-\alpha)\langle x\rangle} |\pa_xv_1(x)|^2 dx\Big)\|v_1-v_2\|^2
\lesssim \|\pa_xv_1(x)\|^2_{H^1_{-\alpha_1}} \|v_1-v_2\|^2.
\ea

With this observation, working in norm $\|\cdot\|$, 
we obtain essentially immediately
existence and uniqueness of a global center 
manifold for the truncated equation/ local center manifold 
for the exact equation that is {\it Lipschitz continuous,}
as a graph over the center subspace $\Sigma_\rmc$.
$C^r$ (Fr\'echet) regularity, $r\geq 1$ may then be obtained similarly
as in the finite-dimensional case \cite{VI,B,Z1,HI}, 
by a bootstrap argument,
using a nested sequence of  mixed-weight norms together with
a general result on smooth dependence with 
respect to parameters of a fixed point mapping $y=T(x,y)$ that is 
Fr\'echet differentiable in $y$ from a stronger to a weaker Banach space,
with differential $T_y$ extending to a bounded, contractive map on the 
weaker space \cite[Lemma 2.5, p. 53]{Z1} (\cite[Lemma 3,p. 132]{Z2}).
See \cite[Appendix A]{PZ2}, for further details.
The $H^1$ exponential approximation property (not discussed in \cite{PZ2}) follows by
transcription to the $H^1$ setting of the finite-dimensional argument given in \cite[Step 7, p. 9]{B}.
\end{proof}

\br\label{obsrmk}
The estimate \eqref{obs}, and introduction of norm \eqref{mixed}, we view as the crucial technical points in our construction of center manifolds, and the 
main novelty in this part of the analysis.
\er

\section{Structure of small-amplitude kinetic shocks}\label{s:prof}
Given existence of a center manifold, on may in principle obtain 
an arbitrarily accurate description of near-equilibrium dynamics via formal Taylor expansion/reduction to normal form.
We give here a particularly simple normal form argument 
describing bifurcation of stationary shock profiles from a simple genuinely nonlinear characteristic equilibrium,
adapting more general center manifold arguments of \cite{MaP,MasZ2} in the finite-dimensional case.
Similarly as in \cite{MaP,MasZ2}, the main idea is to use the fact that equilibria are predicted by the Rankine-Hugoniot shock
conditions \eqref{rh} to deduce normal form information from the structure of the Chapman-Enskog approximation \eqref{ce2}.

\begin{lemma}\label{t1.2}
	Let $\obu\in\ker Q$ be an equilibrium satisfying (H1)-(H2).
	In the simple genuinely nonlinear characteristic case \eqref{gnl}, $m=1$, the center manifolds of \eqref{steady} and
	\eqref{ce2} both consist of the union of one-dimensional fibers parametrized by $q\in \R^r$ as in \eqref{rh} and
coordinatized by $u_1$ as in \eqref{coordn}, satisfying an approximate Burgers flow:
without loss of generality 
	\be\label{burgers2}
\tilde q=0,\qquad
	u_1'=\delta^{-1}\big(-q_1 + \Lambda u_1^2/2\big) + O(|u_1|^3 + |q_1||u_1|+ |q_1|^2),
	\ee
	where $\delta:=\obr^T D_* \obr>0$ with $\obr$, $D_*$ as in \eqref{gnl}, \eqref{ce2}.
In particular, under the normalization $\tilde q=0$, there exist local heteroclinic (Lax shock) connections 
for $q_1 \Lambda<0$ between endstates $u_1^\pm \approx \sqrt{-2 q_1/\Lambda}$.
\end{lemma}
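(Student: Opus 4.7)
The approach is to use the $r$-dimensional conservation law \eqref{cons} to reduce the $(r+1)$-dimensional center dynamics to a scalar ODE in the single direction $u_1$ transverse to the $r$ conserved quantities, then match its normal form to the Chapman-Enskog reduction. By dimension count, on $\cM_\rmc$ (of dimension $m+r=r+1$) the $r$ conserved quantities $q$ force a 1-dimensional foliation; the equilibrium submanifold $\cE$ (of dimension $r$) intersects each fiber in the two Rankine-Hugoniot-related endstates $u_\pm$, and $u_1 = P_{\ker A_{11}} u$, identified with the $\obr$-component of $u$, serves as a coordinate along each fiber.

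For the Chapman-Enskog equation, the reduction is immediate: integrating \eqref{ce2} once yields the first-integral form $D_* u_x = f_*(u) - q$. On its center manifold, writing $u = \bar u + u_1 \obr + u_\perp(u_1,q)$ with $u_\perp = O(|u_1|^2 + |q - \bar q|)$, projecting onto $\obr$, and using $f_*'(\bar u)\obr = 0$ together with the Taylor expansion of $f_*$, one obtains
\[
\delta u_1' = -q_1 + \tfrac{\Lambda}{2} u_1^2 + O\bigl(|u_1|^3 + |u_1||q_1| + |q_1|^2\bigr),
\]
with $\delta = \obr^T D_* \obr > 0$, $q_1 = \obr^T q$, under the normalization $\tilde q = 0$. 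This is precisely \eqref{burgers2}. Heteroclinic (Lax shock) connections between the equilibria $u_1^\pm \approx \sqrt{-2q_1/\Lambda}$ then follow by elementary phase-plane analysis of this scalar Burgers ODE, the monotonicity being automatic from the sign of the right-hand side between the two fixed points.

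For \eqref{steady} I would work in the canonical form \eqref{canon2}. Substituting the formal center graph $w_h = \Xi(w_c) = O(|w_c|^2)$ into the $w_c$-equation gives
\[
w_c' = J w_c + \tilde Q_c(w_c, 0) + O(|w_c|^3),
\]
since the cross terms $\tilde Q_c(w_c, \Xi(w_c))$ contribute only at cubic order. Re-expressing the conservation law $q = P_{\bV^\perp} A \bu$ through \eqref{coord} yields $r$ linear functionals on $w_c$ (immediate from Observation \ref{fibers}, since equilibria are parametrized by $q$), which permit elimination of the Jordan partner $u_2$ and the transverse equilibrium coordinates $\tilde\zeta$ in favor of $(u_1, q)$. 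The resulting scalar ODE in $u_1$ has, at leading order, diffusion coefficient arising from the Schur complement \eqref{Gamma0} restricted to the $\obr$ direction, which equals $\obr^T D_* \obr = \delta$ via \eqref{D}, and quadratic coefficient $\tfrac{1}{2}\obr^T f_*''(\bar u)(\obr, \obr) = \tfrac{\Lambda}{2}$, matching \eqref{burgers2}.

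The main obstacle I expect is the third step -- rigorously verifying that the canonical-form data $\tilde Q_c$, $\Gamma_0$, and the coordinate change \eqref{coord} reproduce the CE coefficients $\delta$ and $\Lambda$. A direct computation is feasible but involves tedious bookkeeping. The cleaner route is an indirect matching argument: the macroscopic projection $\bu \mapsto u = P_{\bV^\perp} \bu$ sends the kinetic center graph onto (a graph over) the CE center graph at leading order, since \eqref{ce2} is by construction the second-order hydrodynamic reduction of \eqref{steady}; and since both $q$ and $u_1$ are macroscopic invariants depending only on this projection, the reduced scalar flows on the two center manifolds must agree to the order claimed, transferring \eqref{burgers2} from \eqref{ce2} to \eqref{steady} without separate calculation.
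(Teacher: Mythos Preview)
Your overall structure—foliate $\cM_\rmc$ by the conserved $q$, reduce to a scalar ODE in $u_1$, identify the Burgers normal form—is correct, and your direct Taylor-expansion reduction for \eqref{ce2} is fine. The kinetic side, however, has two issues worth flagging.

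First, a misidentification: $\delta$ does not arise from $\Gamma_0$. In the canonical form \eqref{canon2} the operator $\Gamma_0$ acts only on the hyperbolic coordinate $w_h=\tv$ and plays no role in the reduced $u_1$ equation at leading order. The coefficient $\delta^{-1}$ enters instead through the Jordan block $J$: the $w_{c,1}$ equation reads $w_{c,1}'=w_{c,2}+\tilde Q_{c,1}$, and the constant of motion $w_{c,2}=-(T_{12}^*)^{-1}v_1$ is identified with $-(T_{12}^*)^{-1}T_{12}^{-1}q_1$ via the conservation law, giving $\delta=T_{12}T_{12}^*$, which one then checks equals $\obr^T D_*\obr$.

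Second, and more substantively, your ``cleaner route'' is circular. That the macroscopic projection of the kinetic center graph matches the CE center graph to second order is not a consequence of \eqref{ce2} being ``by construction the second-order hydrodynamic reduction''—the Chapman--Enskog expansion is a formal asymptotic, and its validity on the center manifold is precisely what the lemma asserts. The paper sidesteps this by matching only \emph{equilibria}: once the kinetic reduced flow is written as $u_1'=\delta^{-1}(-q_1+\delta\chi\, u_1^2)+O(\ldots)$ with $\chi$ undetermined, its zeros must coincide with solutions of \eqref{rh}, since equilibria of \eqref{steady} satisfy \eqref{rh} exactly by \eqref{cons}. A Lyapunov--Schmidt reduction of \eqref{rh} gives the equilibrium normal form $0=-q_1+\tfrac12\Lambda u_1^2+O(\ldots)$, forcing $\delta\chi=\Lambda/2$. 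This equilibrium-matching argument is rigorous and avoids the bookkeeping you anticipated; it is also the same device that works for \eqref{ce2}, so both systems are handled uniformly.
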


\begin{proof}
	First, note that $T_{12}v_1$ in the original coordinates of \eqref{sys1} 
is exactly the first component $q_1$ of $q$ in \eqref{rh}, or $v_1=T_{12}^{-1} q_1$.
By Observation \ref{fibers} and the Implicit Function Theorem, we may take without loss of generality 
$\tilde q=0$ by a shift along equilibrium manifold $\cE$ of the background equilibrium $\obu$.
By \eqref{canon}, therefore, the flow on the $(r+1)$-dimensional center manifold has an $r$-dimensional constant of motion 
\be\label{zetaeq}
\big(w_{c,2},w_{c,3}\big)\equiv (\zeta, \gamma)=\big(-(T_{12}^*)^{-1}v_1, \tilde q\big)= \big(( -T_{12}^*)^{-1}T_{12}^{-1}q_1,0 \big),
\ee
$w$ as in \eqref{coord}, with flow along one-dimensional fibers coordinatized by 
$ w_{c,1}= u_1 - \Gamma_1\tilde v= u_1-\Gamma_1 w_\rmh $
given by the $w_{\rmc,1}$ equation of \eqref{canon}:
\be\label{fibeq}
w_{c,1}'= \zeta+ \phi(w_{c,1},\zeta),
\quad
\phi(w_{c,1},\zeta):=g_{c,1}\big((w_{c,1},\zeta, 0), \Xi(w_{c,1},\zeta, 0)\big)=\mathcal{O}(\|w_{c,1}\|^2,\|\zeta\|^2).
\ee

The factor $ (T_{12}^*)^{-1}T_{12}^{-1}>0$ in term $\zeta= -(T_{12}^*)^{-1}T_{12}^{-1}q_1$ 
is easily recognized as $\delta^{-1}$, where 
$\delta:= T_{12}T_{12}^{*}>0$, or, using $\obr=e_1$,
$\delta= \obr \cdot D_* \obr$ with $D_*$ as in \eqref{ce2}.
Using the fact that  $w_\rmh=\cJ(w_\rmc)=O(|w_\rmc|^2)$ along the center manifold to trade $w_{c,1}$ for
$u_1$ by an invertible coordinate change preserving the order of error terms, we may thus rewrite \eqref{fibeq} as
\be\label{tempfibeqT}
u_1'= \delta^{-1} (-q_1 +  \delta \chi u_{1}^2) +O(|u_{1}|^3 +|u_{1}||q_1|+ |q_1|^2),
\ee
where $\chi$, hence the product $\delta \chi$, is yet to be determined.
On the other hand, performing Lyapunov-Schmidt reduction for the equilibrium problem \eqref{rh}, we obtain the normal form
$$
 0= (-q_1 +  \frac12 \Lambda u_{1}^2) +O(|u_{1}|^3 +|u_{1}||q_1|+ |q_1|^2), 
$$
where $\Lambda$ is as in \eqref{gnl}.
Using the fact that equilibria for \eqref{steady} and \eqref{rh} agree, we find that $\delta \chi$ must be
equal to $\frac12 \Lambda$, yielding a final normal form consisting of the approximate Burgers flow \eqref{burgers}.
A similar computation yields the same normal form for fibers of the center manifold of the formal viscous problem \eqref{ce2};
see also the more detailed computations of \cite{MaP} yielding the same result.

For $q_1 \Lambda>0$, the scalar equation \eqref{burgers} evidently possesses equilibria $\sim \mp\sqrt{2 q_1/\Lambda}$,
connected (since the equation is scalar) by a heteroclinic profile.
Since $\sgn u_1'=-\sgn \Lambda$ for $u_1$ between the equilibria, so that
$\big(\lambda(u)\big)'\sim \Lambda u_1'$ has sign of $-\Lambda^2<0$,
the connection is in the direction of decreasing characteristic $\lambda(u)$, corresponding to a Lax-type solution
of \eqref{rh} (cf. \cite{MaP,MasZ2}).
\end{proof}

\br\label{lamrmk}
Using $\lambda(u_1)\sim \Lambda u_1$, we may rewrite \eqref{burgers2} as \eqref{burgers} as in the introdiuction,
eliminating the $\tilde q$-dependent term $\Lambda$.
However, the ``effective viscosity'' $\delta$ remains dependent on $\tilde q$.
\er

Having determined the normal form \eqref{burgers}, we establish closeness
of profiles of \eqref{steady} and \eqref{ce2} by comparing 
their $u_1$ coordinates, separately, to an exact Burgers shock, then showing that differences in remaining, slaved,
coordinates, since vanishing at both endstates, are negligibly small.

\begin{lemma}[\cite{Li,PlZ}]\label{lburgers}
	Let $\eta\in \R^1$ be a heteroclinic connection of an approximate Burgers equation
	\be\label{approx}
	\delta \eta'= \frac12 \Lambda(-\eps^2 +  \eta^2) +  S(\eps,\eta),
	\quad
	S=O(|\eta|^3+|\eps|^3)\in C^{k+1}(\R^2), \quad k\geq 0,
	\ee
	and $\bar \eta:= -\eps\tanh(\Lambda \eps x/2\delta)$ a connection of the exact Burgers equation
	$\delta\bar \eta'= \frac12 \Lambda(-\eps^2 +  \bar \eta^2)$.
	Then,
	\ba\label{etacomp}
 |\eta_{\pm}-\bar{\eta}_{\pm}| &\leq C\epsilon^2,\\
	|\partial_x^k \big(\bar \eta - \bar \eta_\pm)(x)| &\sim  \eps^{k+1}e^{-\delta  \eps|x|},
	\quad x\gtrless 0, \quad \delta>0,\\
	\big|\partial_x^k \big((\eta- \eta_\pm)- (\bar \eta - \bar \eta_\pm)\big)(x)\big| &\le C \eps^{k+2}e^{-\delta  \eps|x|},
	\quad x\gtrless 0,
	\ea
	uniformly in $\eps>0$, where $\eta_\pm:=\eta(\pm \infty)$, $\bar \eta_\pm:=\bar \eta(\pm \infty)=\mp\eps$ denote
	endstates of the connections.
\end{lemma}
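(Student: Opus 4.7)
The plan is to rescale so that $\eps$ enters as a regular perturbation of a canonical Burgers heteroclinic, and then to compare the two connections by standard hyperbolic-equilibrium perturbation techniques in an exponentially weighted space. First I would pass to scaled variables $\tau:=\eps x$, $Z(\tau):=\eta(x)/\eps$, $\bar Z(\tau):=\bar\eta(x)/\eps$. Since $S(\eps,\eta)=O(|\eps|^3+|\eta|^3)$ with $S\in C^{k+1}$, dividing \eqref{approx} by $\eps^2$ yields
\[
\delta Z'(\tau)=\tfrac12\Lambda(-1+Z^2)+\eps\, T(\eps,Z),\qquad T\in C^{k}(\R^2),
\]
while the exact equation becomes $\delta\bar Z'=\tfrac12\Lambda(-1+\bar Z^2)$ with canonical connection $\bar Z(\tau)=-\tanh(\Lambda\tau/(2\delta))$. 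In these coordinates $Z$ and $\bar Z$ are hyperbolic heteroclinics of two planar vector fields differing by a regular $O(\eps)$ perturbation, which places all of the $\eps$-dependence in one place.

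Next I would treat the three assertions of \eqref{etacomp} in turn. For the endstate bound I would apply the Implicit Function Theorem to $\tfrac12\Lambda(-1+Z^2)+\eps T(\eps,Z)=0$ near $Z=\mp 1$, using that $\partial_Z[\tfrac12\Lambda Z^2]_{Z=\mp 1}=\mp\Lambda\neq 0$, to obtain $Z_\pm=\mp 1+O(\eps)$ and hence $\eta_\pm=\mp\eps+O(\eps^2)$. The decay of $\bar\eta-\bar\eta_\pm$ is then a direct calculation from the explicit tanh, using $1-\tanh(y)\sim 2e^{-2y}$ as $y\to+\infty$ together with $\partial_x=\eps\partial_\tau$. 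For the main comparison bound I would normalize translates so that $Z(0)=\bar Z(0)=0$, set $W_\pm(\tau):=(Z-Z_\pm)-(\bar Z-\bar Z_\pm)$ on $\tau\gtrless 0$, and subtract the two ODEs; using that $Z_\pm$ and $\bar Z_\pm$ are exact equilibria of their respective equations this yields a linear inhomogeneous ODE of the form
\[
\delta W_\pm'=\Lambda\bar Z\, W_\pm+O(|W_\pm|^2)+O(\eps\, e^{-\mu_0|\tau|}),
\]
with $\mu_0=\Lambda/\delta$, the exponentially decaying $O(\eps)$ source arising from the mismatch $\eps T(\eps,Z)-\eps T(\eps,Z_\pm)$ together with the exponential convergence of $Z$ and $\bar Z$ to their endstates. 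Since $\Lambda\bar Z\to\mp\Lambda$ as $\tau\to\pm\infty$, the linear part has exponential dichotomy with $O(1)$ rate, and variation of parameters in a weighted space $C^{k}_\mu$, $0<\mu<\mu_0$, closes a contraction whose fixed point satisfies $\|W_\pm\|_{C^k_\mu}=O(\eps)$. Undoing the scaling ($\eta=\eps Z$, $\partial_x=\eps\partial_\tau$) converts this to the claimed $|\partial_x^j((\eta-\eta_\pm)-(\bar\eta-\bar\eta_\pm))|\le C\eps^{j+2}e^{-\mu\eps|x|}$, with $\mu$ playing the role of the generic positive constant on the right-hand side of \eqref{etacomp}.

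The main technical delicacy will be keeping the normalizations straight: $W_\pm$ must be built by subtracting the \emph{perturbed} endstates $Z_\pm$ rather than the unperturbed $\bar Z_\pm$, so that it actually decays at $\pm\infty$ instead of tending to a nonzero $O(\eps)$ limit; this is why the IFT step is carried out first. Beyond that, propagating regularity of the weighted fixed point up to order $k$ via $S\in C^{k+1}$ and closing the contraction uniformly in $\eps$ are routine regular-perturbation ODE theory.
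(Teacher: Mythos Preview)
Your proposal is correct and follows essentially the same strategy as the paper: rescale $\eta\to\eta/\eps$, $x\to\eps x$ to convert the problem into a regular $O(\eps)$ perturbation of the canonical Burgers heteroclinic, then invoke the Implicit Function Theorem for the endstates and standard hyperbolic-equilibrium perturbation theory (stable manifold plus smooth dependence on parameters) for the profile comparison. The paper's proof is terser, simply citing ``implicit function theorem and stable manifold theorems together with smooth dependence on parameters'' after the rescaling, whereas you spell out the weighted variation-of-constants argument explicitly; but the content is the same.
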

\begin{proof}(From \cite{PZ2}, following \cite{Li})
Rescaling ${\eta} \to {\eta}/\epsilon, {x} \to \Lambda \epsilon \tilde{x}/\beta$,
we obtain the blowup equations
$$
\eta'=\frac{1}{2}(\eta^2-1)+\epsilon \tilde{S}(\eta,\epsilon)
\quad \tilde{S} \in C^{k+1}(\mathbb{R}^2)
$$
and $\bar{\eta}'=\frac{1}{2}(\bar{\eta}^2-1)$, for which estimates \eqref{approx} translate to
	\ba\label{scaleetacomp}
 |\eta_{\pm}-\bar{\eta}_{\pm}| &\leq C\epsilon,\\
	|\partial_x^k (\bar \eta - \bar \eta_\pm)(x)| &\sim  C\eps^{k}e^{-\theta  |x|},
	\quad x\gtrless 0, \quad \theta>0,\\
	|\partial_x^k \big((\eta- \eta_\pm)- (\bar \eta - \bar \eta_\pm)\big)(x)| &\le C \eps^{k+1}e^{-\theta  |x|},
	\quad x\gtrless 0.
	\ea
The estimates \eqref{scaleetacomp} follow readily from the implicit function theorem and stable manifold theorems together with
smooth dependence on parameters of solutions of ODE, giving the result.
\end{proof}

Setting $q_1=\Lambda \eps^2/2$, and either $\eta=u_{REL,1}$ or $\eta=u_{CE,1}$, we obtain approximate Burgers
equation \eqref{approx}, and thereby estimates \eqref{etacomp} relating $\eta=u_{REL,1}$, $u_{CE,1}$ to an exact
Burgers shock $\bar \eta$.

\begin{corollary}[\cite{PZ2}]\label{ctri}
Let $\obu\in\ker Q$ be an equilibrium satisfying (H1)-(H2),
in the noncharacteristic case \eqref{gnl}, and $k$ and integer $\geq 2$.
Then, local to $\obu$ ($\bar u$), each pair of points $u_\pm$ corresponding to a standing Lax-type shock of \eqref{rh}
has a corresponding viscous shock solution $u_{CE}$ of \eqref{ce2} and
relaxation shock solution $\bu_{REL}=(u_{REL},v_{REL})$ of \eqref{steady}, satisfying for all $j\leq k-2$:
	\ba\label{u1comp}
	|\partial_x^j ( u_{REL,1} - u_{REL,1}^\pm )(x)| &\sim C \eps^{j}e^{-\theta  |x|},
	\quad x\gtrless 0, \quad \theta>0,\\
	|\partial_x^j (u_{REL,1}-u_{CE,1})(x)| &\le C \eps^{j+1}e^{-\theta  |x|},
	\quad x\gtrless 0.
	\ea
\end{corollary}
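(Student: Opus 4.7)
The plan is to reduce the problem to a direct application of the scalar Burgers comparison of Lemma \ref{lburgers}, exploiting that by Lemma \ref{t1.2} the $u_1$-coordinate of both the relaxation profile $\bu_{REL}$ and the Chapman-Enskog profile $u_{CE}$ is governed by \emph{the same} approximate Burgers normal form, and then closing by triangle inequality.

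First, given a Lax-type shock pair $u_\pm$ of \eqref{rh} sufficiently local to $\bar u$, I would use the equilibrium-manifold shift in the proof of Lemma \ref{t1.2} to normalize $\tilde q=0$, reducing to a single scalar shock parameter $q_1=\Lambda\eps^2/2$ with common endstates $u_1^\pm\approx\mp\eps$. Applying Theorem \ref{t1.1} and Lemma \ref{t1.2} to \eqref{steady} produces a local center manifold along whose one-dimensional $q$-fibers $u_{REL,1}$ satisfies the approximate Burgers equation \eqref{burgers2}, equivalently \eqref{approx}. The analogous center-manifold reduction for the finite-dimensional system \eqref{ce2} (as in \cite{MaP,MasZ2}) yields the identical normal form for $u_{CE,1}$, with the same leading coefficients $\delta$ and $\Lambda$; these are intrinsic to $D_*$ and $f_*''(\bar u)$ and are forced to agree between the two systems by the Rankine-Hugoniot matching performed in the proof of Lemma \ref{t1.2}, which uses only data common to both.

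Next, I would apply Lemma \ref{lburgers} separately to $\eta=u_{REL,1}$ and $\eta=u_{CE,1}$. Since both profiles connect the same endstates prescribed by \eqref{rh}, we have $u_{REL,1}^\pm = u_{CE,1}^\pm = u_1^\pm$, and both can be compared to the \emph{common} exact Burgers shock $\bar\eta(x)=-\eps\tanh(\Lambda\eps x/2\delta)$. The first asserted estimate then follows directly from the middle line of \eqref{etacomp} applied to $u_{REL,1}$, while the second follows by triangle inequality
\begin{equation*}
|\partial_x^j(u_{REL,1}-u_{CE,1})(x)| \le \sum_{*\in\{REL,CE\}}\big|\partial_x^j\big((u_{*,1}-u_1^\pm)-(\bar\eta-\bar\eta_\pm)\big)(x)\big|,
\end{equation*}
each summand controlled by the third line of \eqref{etacomp}.

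The main obstacle I anticipate is the first step: rigorously verifying that the infinite-dimensional center-manifold reduction of \eqref{steady} produces the \emph{same} leading Burgers coefficients $\delta,\Lambda$ as the finite-dimensional reduction of \eqref{ce2}. Heuristically this is forced by the fact that \eqref{ce2} is the formal hydrodynamic limit of \eqref{steady}, but in our degenerate setting one must carry out the formal Taylor expansion $\Xi(w_c)=-\Gamma_0\Xi'(w_c)(Jw_c+\tilde Q_c)+\tilde Q_h$ to quadratic order, substitute into the $w_{c,1}$ equation of \eqref{canon}, and identify the quadratic coefficient with $\Lambda/(2\delta)$ via matching with the common Rankine-Hugoniot Lyapunov-Schmidt reduction. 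Once this matching is in hand, as established in the proof of Lemma \ref{t1.2}, the remaining scalar Burgers comparisons are routine.
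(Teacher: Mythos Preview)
Your approach is essentially identical to the paper's: apply Lemma \ref{lburgers} separately to $\eta=u_{REL,1}$ and $\eta=u_{CE,1}$ (both satisfying the same approximate Burgers normal form by Lemma \ref{t1.2}), use that the endstates agree as common solutions of \eqref{rh}, and close by triangle inequality. One small correction: the first estimate in \eqref{u1comp} does not follow from the middle line of \eqref{etacomp} alone, since that line concerns the exact Burgers profile $\bar\eta$, not $\eta=u_{REL,1}$; you need to combine the second and third lines of \eqref{etacomp} (or their rescaled analogs \eqref{scaleetacomp}) via triangle inequality, the $O(\eps^{j+1})$ correction being absorbed into the leading $\sim\eps^j$ term.
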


\begin{proof}
	Immediate, by \eqref{scaleetacomp}, Lemma \ref{lburgers} and the triangle inequality, together with the observation that,
	as equilibria of \eqref{ce2} and \eqref{steady}, hence solutions of \eqref{rh},
	endstates $u_{REL,1}^\pm=u_{CE,1}^\pm$ agree.
\end{proof}

\begin{proof} [Proof of Corollary \ref{c1.3}](\cite{PZ2})
	Noting that the $\im A_{11}$ and $\bV$ components of $\bu_{REL}$ are the $C^1$ functions $\Psi(u_{REL,1})$, $\Phi(u_{REL,1})$ of $u_{REL,1}$ along
	the fiber \eqref{burgers}, we obtain \eqref{finalbds}(iii) immediately from \eqref{u1comp}(i).
	Denote by $\Psi_{CE}$ the map describing the dependence of $\im A_{11}$ component of $u_{CE}$ on $u_{CE,1}$
	on the corresponding fiber of \eqref{ce2}.
	Noting that $\Psi-\Psi_{CE}$ and $\Phi- v_*$ both vanish at the endstates $u_{REL,1}^\pm$, we have by smoothness
	of $\Psi$, $\Psi_{CE}$, $\Phi$, $v_*$ that
	$$
	|\Psi-\Psi_{CE}|, \, |\Psi-v_*|=\mathcal{O}(|u_{REL,1}- u_{REL,1}^+|,|u_{REL,1}- u_{REL,1}^-|),
	$$
	giving \eqref{finalbds}(i)-(ii) by \eqref{u1comp}(i)-(ii).
\end{proof}

\br\label{lastrmk}
Applied to Boltzmann's equation, Corollary \ref{c1.3} yields existence/convergence to hydrodynamic shock
profiles in the square root Maxwellian-weighted norm \eqref{mnorm}.
Using a bootstrap argument analogous to that of \cite[Prop. 3.1]{MZ},
one can show \cite[Prop. 1.8]{PZ2} that the center manifold of Theorem \ref{t1.1}
lies in the stronger spaces determined by near-Maxwellian weighted norms
$\|f\|_{\bH^s}:=\|f \underline M^{-s}\|_{L^2(\R^3)}$, $1/2\leq s<1$, yielding further information on
localization of velocity in small-amplitude shock profiles.
This, and the streamlined proof of existence above, are the main novelties in our 
treatment by center manifold techniques
of existence and structure of kinetic shocks.
\er

\end{document}